\newtheorem{thm}{Theorem}[section]
\newtheorem{lem}{Lemma}[section]
\newtheorem{cor}{Corollary}[section]
\newtheorem{prop}{Proposition}[section]
\theoremstyle{remark}
\newtheorem*{remark}{Remark}
\newcommand{\al}[2]{\alpha_{#1}^{(#2)}}
\newcommand{\floor}[1]{\left\lfloor #1 \right\rfloor}
\numberwithin{equation}{section}
\title{New Estimates on the Bounds of Brunel's Operator}
\author{I. Assani$^*$}
\address{$^*$ Department of Mathematics, University of North Carolina at Chapel Hill}
\email{assani@email.unc.edu}
\urladdr{https://idrisassani.web.unc.edu/}
\author{R.S. Hallyburton$^{**}$}
\address{$^{**}$ University of North Carolina at Chapel Hill}
\email{scotch@live.unc.edu}
\author{S. McMahon$^\dagger$}
\address{$^\dagger$ University of North Carolina at Chapel Hill}
\email{seanmcmahon1394@gmail.com}
\author{S. Schmidt$^\ddag$}
\address{$^\ddag$ University of North Carolina at Chapel Hill}
\email{sps@live.unc.edu}
\author{C. Schoone$^\mathsection$}
\address{$^\mathsection$ University of North Carolina at Chapel Hill}
\email{csch2@live.unc.edu}
\date{\today}
\begin{document}
\maketitle

\let\thefootnote\relax\footnotetext{R.S.H. and C.S. acknowledge support from the Tom and Elizabeth Long Excellence Fund for Honors administered by Honors Carolina.}
\begin{abstract}
We study the coefficients of the Taylor series expansion of powers of the function $\psi(x)=\frac{1-\sqrt{1-x}}{x}$, where the Brunel operator $A\equiv A(T)$ is defined as $\psi(T)$ for any mean-bounded $T$. We prove several new precise estimates regarding the Taylor coefficients of $\psi^n$ for $n\in\mathbb{N}$. We apply these estimates to give an elementary proof that for any mean-bounded, not necessarily positive operator $T$ on a Banach space $X$, the Brunel operator $A(T):X\to X$ is power-bounded and satisfies $\sup_{n\in\mathbb{N}} \|n(A^n-A^{n+1})\| < \infty$ (equivalently, $A(T)$ is a Ritt operator). Along the way we provide specific details of results announced by A. Brunel and R. Emilion in \cite{Brunel}.
\end{abstract}

\section{Introduction}
In \cite{Brunel}, A. Brunel and R. Emilion introduced properties of the Brunel operator.
If we write $\psi^n(x)= x^{-n}\left(1-\sqrt{1-x}\right)^n=\sum_{p=0}^{\infty} \alpha_p^{(n)}
x^p$ and $T$ is an operator from a Banach space $X$ to itself, then the Brunel operator is defined as
$A(T) \equiv \psi(T)= \sum_{p=0}^{\infty} \alpha_p^{(1)}T^p$. We first recall a few definitions. An operator $T: X \to X$ is a \emph{contraction} if $\Vert T\Vert\leq1$, and \emph{power-bounded} if $\sup_{p\in\mathbb{N}}\Vert T^p\Vert<\infty$. $T$ is said to be \emph{mean-bounded} if the Cesàro
averages $M_N(T)=\frac{1}{N}\sum_{n=0}^{N-1}T^n$ are uniformly bounded, i.e. $\sup_{N\in\mathbb{N}}\|M_N(T)\| < \infty$. If a partial ordering is defined on $X$, we say that $T$ is \emph{positive} if for $X^+=\{f \in X, f \geq 0\}$, we have $TX^+\subseteq X^+$. Lastly, a \emph{Ritt operator} $S$ is a power-bounded operator on a Banach space which satisfies the \emph{Ritt condition}:
\[\sup_{n\in\mathbb{N}}n\left\Vert S^{n+1}-S^n\right\Vert<\infty.\] 


The Brunel operator $A(T)$ is well-defined if $T$ is a contraction. More generally, if $T$ is mean-bounded, then $A$ is also well-defined (see Theorem \ref{mean_bound_bound}). In ergodic theory, typical examples of
contractions on $L^p(X,\mu)$ are given by measure-preserving transformations. When $S$ is measure-preserving, the
operator $T$ defined by the relation $Tf = f\circ S$ is an isometry in each $L^p$ for $1\leq p \leq \infty$; this makes the Brunel operator associated with $T$ a contraction in all $L^p$ spaces with $1\leq p\leq\infty$. The classical Birkhoff pointwise and Von Neumann norm convergence theorems say that if $f \in L^p$
then the Cesàro averages $M_N(T)f$ converge $\mu$-almost everywhere and in $L^p$ norm for $1\leq
p<\infty$. One of the major properties of the Brunel operator is that these two classical results are
translated into convergence of powers of $A$. In fact, when $T$ is positive, the convergence (in norm or pointwise) of these
averages is equivalent to the convergence (norm or pointwise) of the powers $A^nf$. More on this equivalence is given in \cite{preprint}.

In ergodic theory, one typically considers the norm or pointwise convergence of the sums $\frac{1}{N}\sum_{n=0}^{N-1} T^nf$, where $T$ is an operator on the space $L^p(X,\mu)$ for some $p\in[1,\infty)$ (or $p=\infty$ when $\mu(X)<\infty)$, and where $f\in L^p(X,\mu)$. If this sum converges in norm as $N\to\infty$, then $\sup_{N\in\mathbb{N}}\left\Vert\frac{1}{N}\sum_{n=0}^{N-1} T^nf\right\Vert<\infty$. It then follows from the Banach-Steinhaus theorem that $T$ is mean-bounded; therefore, the condition of mean-boundedness is the most general condition under which one can expect the convergence of the Cesàro
averages.


In this paper, we develop  properties of $\psi^n(x)$ and the resulting Taylor coefficients $\alpha_{p}^{(n)}$ in order to prove general results about the Brunel operator. After some preliminary results, Section 3 establishes properties regarding how these coefficients interact with each other. In particular, we prove a bound on the difference $\left|\alpha_p^{(n+1)}-\alpha_p^{(n)}\right|$ that is independent of $n$. Lemmas \ref{ijk_lemma} and \ref{Upper_Lower_Kn} establish monotonicity results of the coefficients which are fundamental to the development of the summation estimates established in Section 4. Using elementary techniques, we determine the value of many constants that bound these summations, including the following estimate (see Theorem \ref{main}):
\[\sup_{n\in\mathbb{N}}n\sum_{p=0}^\infty\left|\al{p}{n}-\al{p}{n+1}\right|\leq\frac{33}{2}.\]
These coefficient summations in turn allow for simple proofs of the theorems in Section 5 in which we develop fundamental properties of the Brunel operator. 


In \cite{Brunel}, Brunel and Emilion announced, for $T$ mean-bounded on a Banach Space $X$, that
$\|A(T)^n- A(T)^{n+1}\| \rightarrow_n 0 $ (2.7, pg. 105). In \cite{Lootgieter}, Lootgieter proves that, for $T$ a positive mean-bounded operator,
$A(T)$ (and, in general any positive power-bounded power-subadditive operator) is a Ritt operator on $L^p$ (Proposition (**), pg. 4305). In Theorem \ref{mean_bound_bound} we drop the positivity condition and prove that, for any Banach space $X$, $A(T)$ is a Ritt operator on $X$ when $T$ is mean-bounded. Results based on the study of the Taylor coefficients of $\psi$ play a central role in this theorem. 

Recent work in \cite{Dungey} shows that for any power-bounded operator $S$ and any $r\in(0,1)$, the operator $I-(I-S)^r$ is a Ritt operator. The Brunel operator is instead generated via the functional calculus by $\psi(x)=x^{-1}(1-\sqrt{1-x})$, and is a Ritt operator whenever $T$ is mean-bounded, a strictly weaker condition than power-boundedness. Hence for any mean-bounded $T$, the powers $A^n(T)$ of the Brunel operator not only converge in norm as $n\to\infty$ but even satisfy the Ritt condition $\sup_{n}n\left\Vert A^n(T)-A^{n+1}(T)\right\Vert<\infty$.

Our methods use elementary techniques; specifically, we avoid calculus of residues as used by Brunel in \cite{Early_Brunel}. Along the way we also sharpen and give specific details of
other properties announced in \cite{Brunel}, including 2.3 and 2.6 (pg. 105) (see Corollary \ref{monotone_positive_operator_cor}, and Proposition \ref{inversion_formula} \& Theorem \ref{mean_bound_bound}, respectively). The computations made in the paper are often delicate, and our aim is to provide as many details as reasonably possible to make verification easy for the interested reader. We give detailed proofs of main results and technical lemmas in the main paper; auxiliary results which are lengthy but otherwise easily verified are placed in the appendix for the sake of completeness. As a general note, the constants derived in the following estimates are not necessarily the minimal constants.

\section{Preliminaries}

A preliminary version of Brunel's operator appeared in \cite{Early_Brunel} (pg. 335), inspired by a continuous semi-group from Dunford and Schwartz in \cite{Dunford-Schwartz}. Later, some results from this paper were presented in more detail in \cite{Krengel}. We begin by restating and partially reproving one of these results from \cite{Krengel} (pgs. 212-213). A modification of the function $\xi$ as defined below yields the function $\psi$ which defines the Brunel operator.
\begin{lem}\label{krengel}
For $\xi(x) = 1-\sqrt{1-x}$ and $n\in\mathbb{N}$, the coefficients $\beta_p^{(n)}$ in the expansion
\begin{equation}{\label{krengelsum}}
   [\xi(x)]^n = \sum_{p=0}^{\infty} \beta_p^{(n)} x^p 
\end{equation}
are given by:
\begin{equation}{\label{alphadef}}
\beta_p^{(n)} =
   \begin{cases}
   0  &  p<n; \\
   \frac{n}{2p} 2^{n+1-2p}{{2p-n-1}\choose{p-1}} & p\geq n .
   \end{cases}
\end{equation}
\end{lem}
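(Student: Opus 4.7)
My plan is to extract a three-term recurrence from the defining identity of $\xi$ and induct on $n$. Since $\xi(x)=1-\sqrt{1-x}$ satisfies $(1-\xi)^2 = 1-x$, equivalently $\xi^2 - 2\xi + x = 0$, multiplying by $\xi^{n-1}$ yields
\[
\xi^{n+1} \;=\; 2\,\xi^n \;-\; x\,\xi^{n-1}, \qquad n\geq 1,
\]
which at the Taylor-coefficient level reads $\beta_p^{(n+1)} = 2\beta_p^{(n)} - \beta_{p-1}^{(n-1)}$. This recurrence is the engine of the proof.

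The base cases are $n=0$ and $n=1$. The former is immediate from $\xi^0 = 1$, giving $\beta_0^{(0)}=1$ and $\beta_p^{(0)}=0$ for $p\geq 1$. For the latter, I would expand $(1-x)^{1/2}$ by the generalized binomial theorem and rearrange to obtain $\beta_p^{(1)} = \frac{1}{2p}\,2^{2-2p}\binom{2p-2}{p-1}$, matching the claim at $n=1$.

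For the inductive step, assuming the formula at levels $n-1$ and $n$, I substitute into the right-hand side of the recurrence and factor out the common power $2^{n+1-2p}$; the problem reduces to verifying the binomial identity
\[
\frac{n}{p}\binom{2p-n-1}{p-1} \;-\; \frac{n-1}{p-1}\binom{2p-n-2}{p-2} \;=\; \frac{n+1}{p}\binom{2p-n-2}{p-1}.
\]
Applying Pascal's rule $\binom{2p-n-1}{p-1}=\binom{2p-n-2}{p-1}+\binom{2p-n-2}{p-2}$ on the left, then using the scalar identity $\frac{n}{p}-\frac{n-1}{p-1} = \frac{p-n}{p(p-1)}$, the equality collapses to the elementary binomial-ratio
\[
\frac{\binom{2p-n-2}{p-1}}{\binom{2p-n-2}{p-2}} \;=\; \frac{p-n}{p-1},
\]
which is immediate from the definition. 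The piecewise vanishing $\beta_p^{(n+1)} = 0$ for $p<n+1$ is preserved automatically by the recurrence, modulo a direct check at the boundary $p=n$ where both sides equal $0$.

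The main obstacle is simply the careful algebraic bookkeeping in the inductive step — aligning the factors of $2$, the prefactors $n/p$ and $(n-1)/(p-1)$, and the shifted binomial indices so that everything lands on exactly the stated closed form. Conceptually nothing is deep. As a parallel one-shot alternative, one could bypass the induction via Lagrange inversion applied to $\xi = x\,(2-\xi)^{-1}$, yielding $[x^p]\xi^n = \frac{n}{p}\,[y^{p-n}](2-y)^{-p}$; expanding $(2-y)^{-p}$ by the generalized binomial theorem together with the symmetry $\binom{2p-n-1}{p-n}=\binom{2p-n-1}{p-1}$ recovers the formula in one stroke.
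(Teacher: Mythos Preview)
Your proposal is correct and follows essentially the same route as the paper: both derive the recurrence $\xi^{n+1}=2\xi^n-x\,\xi^{n-1}$ from $\xi^2=2\xi-x$, establish the $n=1$ case via the generalized binomial expansion of $(1-x)^{1/2}$, and close the induction by verifying the resulting binomial identity (the paper computes it directly, you organize it via Pascal's rule, but the content is the same). The only cosmetic difference is that you start the induction at $n=0,1$ whereas the paper starts at $n=1,2$.
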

\begin{proof}
We prove the lemma by induction. We define:
\[\binom{1/2}{k}=\frac{(1/2)(1/2-1)(1/2-2)\cdots\big(1/2-(k-1)\big)}{k!}.\]
From this definition it follows that:
\begin{equation*}
    {{1/2}\choose{p}} = {{2p}\choose{p}} \frac{(-1)^{p+1}}{2^{2p}(2p-1)}.
\end{equation*}
Additionally, for $x\in[-1,1]$:
\begin{equation*}
    (1-x)^{1/2} = \sum_{p=0}^{\infty}(-1)^p{{1/2}\choose{p}}x^p.
\end{equation*}
Consider the base case $n=1$. We have:
\begin{align*}
\begin{split}
 \xi(x) &= 1 - \sum_{p=0}^{\infty}(-1)^p{{1/2}\choose{p}}x^p \\
 &= \sum_{p=1}^{\infty} {(-1)^{p+1}}{{1/2}\choose{p}}x^p \\
&= \sum_{p=1}^{\infty}{(-1)^{p+1}} {{2p}\choose{p}} \frac{(-1)^{p+1}}{2^{2p}(2p-1)}x^p.
    \end{split}       
\end{align*}
From this we can derive that: \begin{equation}{\label{krengelder}}
    \beta_p^{(1)} = \frac{1}{2p}2^{2-2p}{{2p-2}\choose{p-1}}.
\end{equation}
when $p\geq{1}$ and $0$ otherwise. This completes the base case. Equation (\ref{krengelder}) shows that as we multiply $\xi(x)$ by itself $n$ times (which makes sense since the series converges absolutely for all $x\in [-1,1]$), every coefficient in front of $x^p$ for $p<n$ must be 0. The proof of the induction step for $p\geq k$, which we omit here for brevity (see \cite{Krengel}, pgs. 212-213), uses the following recursion for $k\geq1$ that Krengel attributes to N. Neumann (pg. 212):
\begin{equation}\label{eq:krengel_recurrence}
    [\xi(x)]^{k+1} = [\xi(x)]^{k-1}[\xi(x)]^2 = 
[\xi(x)][2\xi(x)-x]^{k-1}.\qedhere
\end{equation}
\end{proof}

We can now use Krengel's result to establish some basic properties of $\psi(x)$.

\begin{lem}\label{applied_krengel}
For the function $\psi (x) = \frac{1- \sqrt{1-x}}{x}$, the coefficients $\alpha_p^{(n)}$, $n \in \mathbb{N}$ in the expansion $$[\psi(x)]^n = \sum_{p=0}^{\infty} \alpha_p^{(n)} x^p $$ are given by:
\begin{equation}{\label{maindefintion}}
    \alpha_p^{(n)} = \begin{cases}
    1, & n =p = 0;\\
    \frac{n}{n+p}2^{-n-2p}{{n+2p-1}\choose{p}}, & \emph{otherwise}.
    \end{cases}
\end{equation}
For each $n\in\mathbb{N}$, the series converges for all $x\in[-1,1]$.
\end{lem}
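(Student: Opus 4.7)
The plan is to deduce this directly from Lemma \ref{krengel} by the elementary observation that $\psi(x) = \xi(x)/x$, so $[\psi(x)]^n = [\xi(x)]^n / x^n$. Since Lemma \ref{krengel} gives $[\xi(x)]^n = \sum_{p \geq n} \beta_p^{(n)} x^p$ (the lower-order terms all vanish), dividing by $x^n$ and reindexing $q = p - n$ yields
\[
[\psi(x)]^n = \sum_{q=0}^{\infty} \beta_{q+n}^{(n)} \, x^q,
\]
so $\alpha_q^{(n)} = \beta_{q+n}^{(n)}$.

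Next I would substitute the formula from (\ref{alphadef}) with $p$ replaced by $q+n$ and simplify: the prefactor becomes
\[
\frac{n}{2(q+n)} \cdot 2^{n+1-2(q+n)} = \frac{n}{n+q} \cdot 2^{-n-2q},
\]
and the binomial coefficient becomes $\binom{n+2q-1}{q+n-1}$. The only nontrivial step is to rewrite this via the symmetry $\binom{m}{k} = \binom{m}{m-k}$ with $m = n+2q-1$ and $k = q+n-1$, which gives $m-k = q$ and hence $\binom{n+2q-1}{q+n-1} = \binom{n+2q-1}{q}$. This yields the claimed closed form (\ref{maindefintion}).

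For the convergence claim on $[-1,1]$, I would argue as follows. The coefficients $\alpha_p^{(n)}$ are manifestly nonnegative, so it suffices to check convergence at $x = 1$. But $\psi(1) = 1$, so $[\psi(1)]^n = 1$, and in fact the identity $[\xi(x)]^n = x^n [\psi(x)]^n$ together with the absolute convergence of $\xi$'s expansion on $[-1,1]$ (already used in the proof of Lemma \ref{krengel}) forces $\sum_{q=0}^\infty \alpha_q^{(n)} = 1$. Hence $\sum_q \alpha_q^{(n)} |x|^q \leq 1$ for $|x|\leq 1$, which gives absolute (and uniform) convergence on $[-1,1]$.

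There is no real obstacle here: once one recognizes $\psi = \xi/x$, the lemma is essentially a restatement of Lemma \ref{krengel} after a shift of index, and the only subtle point is the binomial reflection identity used to put the result in the stated form.
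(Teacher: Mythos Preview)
Your derivation of the closed form for $\alpha_p^{(n)}$ is exactly the paper's approach: both obtain $\alpha_q^{(n)} = \beta_{q+n}^{(n)}$ from $\psi = \xi/x$ and Lemma~\ref{krengel}, and you are simply more explicit than the paper about the binomial reflection $\binom{n+2q-1}{q+n-1} = \binom{n+2q-1}{q}$ needed to reach the stated form.

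For the convergence on $[-1,1]$ you take a genuinely different, and cleaner, route. The paper computes the ratio $\alpha_{p+1}^{(n)}/\alpha_p^{(n)}$ explicitly, applies the ratio test to get radius of convergence $1$, and then handles the endpoints by appealing back to the absolute convergence of the $\xi^n$ expansion. You bypass the ratio test entirely: since the $\alpha_p^{(n)}$ are nonnegative, convergence at $x=1$ (which you get directly from $\sum_q \alpha_q^{(n)} = \xi^n(1) = 1$, using the absolute convergence of the $\xi$-series already invoked in Lemma~\ref{krengel}) immediately gives absolute convergence on all of $[-1,1]$ by comparison. Your argument is shorter and yields the extra information $\sum_p \alpha_p^{(n)} = 1$ for free; the paper's ratio-test computation, on the other hand, produces the explicit ratio $\alpha_{p+1}^{(n)}/\alpha_p^{(n)} = \frac{(n+2p+1)(n+2p)}{4(p+1)(n+p+1)}$, which is itself useful elsewhere.
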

\begin{proof}
For $x\neq0$ it follows immediately from Lemma \ref{krengel} that each $\alpha_p^{(n)}$ satisfies the given expression by dividing by $x$ on both sides of \eqref{krengelsum} and shifting the $p$ indices in the subsequent series. Note that, when $n \neq 0$, $\alpha_p^{(n)}$ is nonzero for all $p$ due to shifting. For the case of $x = 0$, observe that:
\begin{equation*}
    \lim_{x \to 0} \bigg{(}\frac{[\xi(x)]}{x}\bigg{)}^n = \frac{1}{2^n} = \alpha_0^{(n)}.
\end{equation*}
Further, we have by a brief computation:
\[\left|\frac{\alpha_{p+1}^{(n)}}{\alpha_p^{(n)}}\right|=\frac{(n+2p+1)(n+2p)}{4(p+1)(n+p+1)},\]
so the radius of convergence of the series is $1$ by the ratio test. Moreover, at $x=1$ and $x=-1$, the series expansion of $\psi^n(x)$ coincides up to a sign with that of $\xi^n(x)$, which converges absolutely for all $x\in[-1,1]$. Hence, the series expansion of $\psi^n(x)$ is convergent for all $x\in[-1,1]$.
\end{proof}
\begin{remark}
    It is often helpful to rewrite $\alpha_p^{(n)}$ in the following more symmetric form:
    \[\alpha_p^{(n)}=\frac{n}{n+2p}2^{-(n+2p)}\binom{n+2p}{p}.\]
\end{remark}
The following result gives a useful recurrence relation for the coefficients $\alpha_p^{(n)}$ for $n\geq1$:
\begin{lem}\label{alpha_reurrence}
    The coefficients $\alpha_p^{(n)}$ satisfy the following recurrence:
    \begin{equation}\label{eq:alpha_recurrence}
        \alpha_p^{(n+1)}=2\alpha_{p+1}^{(n)}-\alpha_{p+1}^{(n-1)}.
    \end{equation}
\end{lem}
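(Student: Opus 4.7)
The plan is to derive the recurrence directly from Krengel/Neumann's recurrence for $\xi$, equation (\ref{eq:krengel_recurrence}), by translating it into a recurrence for $\psi$. Since $\psi(x) = \xi(x)/x$, we have $[\xi(x)]^k = x^k [\psi(x)]^k$ as formal power series (and as analytic identities on a neighborhood of $0$), so (\ref{eq:krengel_recurrence}) rewrites as
\begin{equation*}
    x^{n+1}[\psi(x)]^{n+1} \;=\; 2\,x^{n}[\psi(x)]^{n} \;-\; x\cdot x^{n-1}[\psi(x)]^{n-1}.
\end{equation*}
Dividing both sides by $x^{n}$ yields the clean identity
\begin{equation*}
    x\,[\psi(x)]^{n+1} \;=\; 2\,[\psi(x)]^{n} \;-\; [\psi(x)]^{n-1}.
\end{equation*}

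From here the proof is a one-line coefficient comparison. By Lemma \ref{applied_krengel}, the power series $[\psi(x)]^k = \sum_{q\ge 0}\alpha_q^{(k)}x^q$ converge on $[-1,1]$, so we can equate coefficients of $x^{p+1}$ on both sides. On the left, the coefficient of $x^{p+1}$ in $x[\psi(x)]^{n+1}$ equals the coefficient of $x^p$ in $[\psi(x)]^{n+1}$, which is $\alpha_p^{(n+1)}$. On the right, the coefficient of $x^{p+1}$ is $2\alpha_{p+1}^{(n)} - \alpha_{p+1}^{(n-1)}$. Equating these gives exactly (\ref{eq:alpha_recurrence}).

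There is no real obstacle here beyond book-keeping with the index shift; the only care needed is to note that multiplying by $x$ shifts the indexing by one, so the recurrence naturally relates $\alpha_p^{(n+1)}$ to coefficients with the subscript shifted up to $p+1$. As a sanity check, one could instead verify (\ref{eq:alpha_recurrence}) directly from the closed form $\alpha_p^{(n)} = \frac{n}{n+p}2^{-n-2p}\binom{n+2p-1}{p}$; plugging in the three values on the right-hand side, factoring out $2^{-n-2p-2}\binom{n+2p+1}{p+1}$, and simplifying the resulting rational expression in $n,p$ recovers $\alpha_p^{(n+1)}$, but this is strictly longer and less illuminating than the generating-function route.
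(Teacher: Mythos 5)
Your proposal is correct and follows essentially the same route as the paper: both translate the Krengel recurrence for $\xi$ into the identity $x[\psi(x)]^{n+1}=2[\psi(x)]^n-[\psi(x)]^{n-1}$ (the paper writes it with $1/x$ factors instead) and then read off the recurrence by comparing coefficients with the index shift. Nothing further is needed.
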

\begin{proof}
    We use the recurrence in Equation \eqref{eq:krengel_recurrence} to obtain:
    \[\psi^{n+1}(x)=\frac{\xi^{n+1}(x)}{x^{n+1}}=\frac{2}{x}\psi^n(x)-\frac{1}{x}\psi^{n-1}(x).\]
    By comparing coefficients in the Taylor series expansion of $\psi^n$ and shifting the $p$-indices, the desired recurrence follows.
\end{proof}

We now have a theorem regarding the interactions of powers of the coefficients. Part $(2)$ (the \emph{power-subadditivity} of $\alpha_p^{(n)}$) of the following was proved in Example 1.5 of \cite{Lootgieter}. Here we obtain the result as a consequence of $(1)$. 

\begin{thm}\label{monotonicity_results}
The following are properties of $\alpha_p^{(n)}$ for all $n,m,p \in \mathbb{N}$:
\begin{enumerate}
    \item   $\frac{\alpha_p^{(n+1)}}{n+1} \leq \frac{\alpha_p^{(n)}}{n}; $ 
    \item $\alpha_p^{(m+n)} \leq \alpha_p^{(m)}+ \alpha_p^{(n)}. $ 
\end{enumerate}

\end{thm}

\begin{proof}

For (1), we use Lemma \ref{applied_krengel} to get the following chain of equivalences:

\begin{align*}
&\frac{\alpha_p^{(n+1)}}{n+1}-\frac{\alpha_p^{(n)}}{n} &\leq 0 \\
    \frac{n(n+1)}{n+p+1}2^{-1-n-2p} {{n+2p}\choose{p}}& -\frac{n^2}{n+p}2^{-n-2p}{{n+2p-1}\choose{p}} \\
     &- \frac{n}{n+p}2^{-n-2p}{{n+2p-1}\choose{p}} &\leq 0\\
     &n2^{-n-2p}{{n+2p}\choose{p}} \bigg{(} \frac{n+1}{2(n+p+1)} - \frac{n+1}{n+2p}\bigg{)} &\leq 0\\ 
     &-\frac{n^2+3n+2}{2(n+p+1)(n+2p) } &\leq 0
\end{align*}
 where the last line is true for all positive $n$ and $p$. Thus, the result holds. \vspace{1 mm}

For (2), fix $m,n\in \mathbb{N}$ and without loss of generality suppose that $m\geq n$. By (1), we have that $\frac{\alpha_p^{(k)}}{k}$ is decreasing for all $k$. Thus:
\begin{equation*}
    m\alpha_p^{(n)} \leq n\alpha_p^{(m)}
\end{equation*}
and therefore:
\begin{equation*}
    \frac{\alpha_p^{(m+n)}}{m+n} \leq \frac{\alpha_p^{(n)}}{n}.
\end{equation*}
This implies that:
\begin{equation*}
    \alpha_p^{(m+n)} \leq \frac{(m+n)\alpha_p^{(n)}}{n} \leq \alpha_p^{(n)} + \alpha_p^{(m)},
\end{equation*}
which proves (2).
\end{proof}
\begin{cor}\label{montonicity_results_corollary}
$\psi(x)$  satisfies the enumerated properties in Theorem \ref{monotonicity_results}, i.e. the following are true for all $n,m \in \mathbb{N}$, $x \in [0,1]$:
\begin{enumerate}
    \item   $\frac{\psi^{n+1}(x)}{n+1} \leq \frac{\psi^n(x)}{n} $;
    
    \item $\psi^{m+n}(x) \leq \psi^m(x)+ \psi^n(x) $;
    
    \item $\big(1-\psi^n(x)\big)\big(1-\psi^m(x)\big)\leq1.$
\end{enumerate}
\end{cor}
\begin{proof}
Since: \begin{equation*}
    [\psi(x)]^n = \sum_{p=0}^\infty \alpha_p^{(n)} x^n
\end{equation*}
for all $x \in [0,1]$, (1) and (2) are consequences of Theorem 1.1, since $\alpha_p^{(n)}$ is positive. (3) follows from (2), as:
\[\big(1-\psi^n(x)\big)\big(1-\psi^m(x)\big)=1-\psi^n(x)-\psi^m(x)+\psi^{m+n}(x)\leq1.\qedhere\]
\end{proof}
\begin{cor}\label{monotone_positive_operator_cor}
Let $T$ be a positive operator on a Banach lattice $X$. Then whenever $A = \psi(T)$ is well-defined, the following are true for all $m,n \in \mathbb{N}$: \begin{enumerate}
    \item $\frac{A^{n+ 1}}{n+1} \leq \frac{A^n}{n}$;
    \item $A^{m + n} \leq A^m + A^n$;
    \item $\big(I-A^m\big)\big(I-A^n\big)\leq I$.
\end{enumerate}
\end{cor}
\begin{remark}
    This result was announced in \cite{Brunel} without details (pg. 105, Theorem 2.6). Property (2) was proved in \cite{Lootgieter} (pg. 4301).
\end{remark}

\section{Basic Estimates}
In this section we prove several basic estimates on the Taylor series coefficients $\alpha_p^{(n)}$ which will be applied later. In particular, we derive sharp bounds on the difference of coefficients $\left|\alpha_p^{(n+1)}-\alpha_p^{(n)}\right|$, and subsequently for $\left|{\psi(x)}^{n+1} - {\psi(x)}^n\right|$. We first state a proposition proved in \cite{Hirschhorn}; we will often use this estimate in the sequel. 
\begin{prop}\label{Hirschhorn_bound}
For all $p \in \mathbb{N}^+$: \begin{equation}
    \frac{1}{\sqrt{\pi p}}\left(1-\frac{1}{4p}\right) \leq 4^{-p}{2p \choose p } \leq \frac{1}{\sqrt{\pi p}}.
\end{equation}
\end{prop}
We will often use the slightly weaker estimate of \begin{equation}
    \frac{1}{\sqrt{\pi(p+1)}}\leq 4^{-p}\binom{2p}{p},
\end{equation}
which holds when $p = 0$ (by convention taking $0!=1$). From this we derive the following estimate. 

\begin{prop}\label{alpha_estimate}
    For all $n, p \in \mathbb{N}, p \geq 1$:\begin{equation}
        \alpha_{p}^{(n)} \leq \frac{n}{n + 2p}\frac{1}{\sqrt{\pi p}}.
    \end{equation}
\end{prop}
\begin{proof}
    The result follows from: \begin{equation}
        \alpha_p^{(n)} = \frac{n}{n + 2p}4^{-p}\binom{2p}{p}\prod_{j=0}^{n-1}\frac{2p + n -j}{2(p + n-j)} \leq \frac{n}{n+2p}\frac{1}{\sqrt{\pi p}}.\qedhere
    \end{equation}
\end{proof}
\begin{cor}\label{N_alpha_N_estimate}
    For $n \in \mathbb{N}$ fixed, \begin{equation}
        \lim_{N \to \infty}N\alpha_N^{(n)} = 0. 
    \end{equation}
\end{cor}
We now derive a few properties about the monotonicity of various sections of $\alpha_ p^{(n)}$.

\begin{lem}\label{ijk_lemma}
Let $\alpha_p^{(n)}$ be as in Lemma \ref{applied_krengel}. Then the following properties hold for all $n,p\in\mathbb{N}^+$:
\begin{enumerate}

    \item $\alpha_p^{(n)} \leq \alpha_{p+1}^{(n)}$ if and only if $p \leq \frac{n^2-3n-4}{6} = I_n$;
    
    \item $\alpha_p^{(n)} \geq \alpha_{p}^{(n+1)}$ if and only if $p \leq \frac{n^2+n}{2} = J_n$;
    
    \item Suppose $n>1$. If $p \leq \frac{n^4-2n^3-13n^2-10n}{12n^2+12n-24} = K_n$, then $\alpha_p^{(n)}-\alpha_p^{(n+1)} \leq \alpha_{p+1}^{(n)}-\alpha_{p+1}^{(n+1)}$;
    
    \item Under the hypothesis of \text{(3)}, $|\alpha_p^{(n)}-\alpha_p^{(n+1)}| \leq |\alpha_{p+1}^{(n)}-\alpha_{p+1}^{(n+1)}|$.
\end{enumerate}

\end{lem}

To illustrate the significance of the above sections, consider Figure 1 describing $K_n$. Since $K_n$ increases quadratically as $n$ increases, we will be able to split $\sum_{p=0}^\infty |\alpha_p^{(n+1)}-\alpha_p^{(n)} |$ into simpler parts. The sum after $\lfloor K_n \rfloor$, as we take $n$ large, will become less significant, as will be seen in Theorem \ref{sum_coefficient_bound}.

\begin{figure}[htp]
    \centering
    \includegraphics[width=10cm]{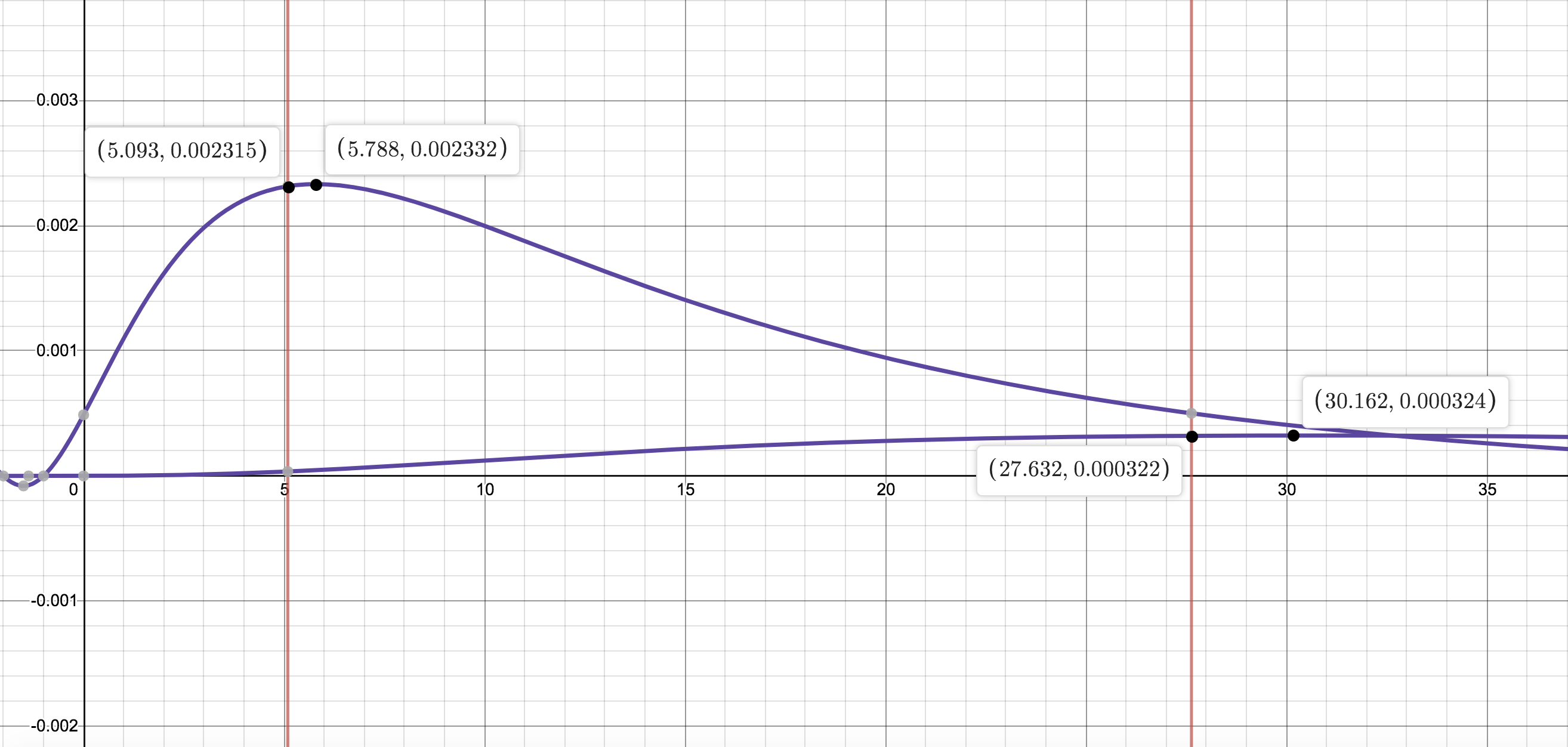}
    \caption{Graph of ($p$, $\alpha_p^{(n)} - \alpha_p^{(n+1)}$) (purple curves) along with $K_n$ (red lines) for $n=10, 20$. Note that $K_n$ is less than the $p$ such that $\alpha_p^{(n)} - \alpha_p^{(n+1)}$ attains its local maximum value. Graphed with Desmos.}
    \label{fig:K_n}
\end{figure}

\begin{proof}
For (1), we have the following chain of equivalences. 
\begin{align}
    \alpha_p^{(n)} &\leq \alpha_{p+1}^{(n)}\label{eq:(1)initial}\\
    \begin{split}
     \frac{n}{n+p}2^{-n-2p}{{n+2p-1}\choose{p}} &\leq \frac{n}{n+p+1}2^{-n-2p-2}{{n+2p+1}\choose{p+1}} \\
     \frac{4(n+p+1)}{n+p} &\leq \frac{(n+2p+1)(n+2p)}{(p+1)(n+p)}\\
     4pn+4p^2+ 4p+ 4n + 4p + 4 &\leq n^2+2pn + 2pn + 4p^2+n + 2p
     \end{split}\\
     \frac{n^2-3n -4}{6}&\leq p.\label{eq:(1)final}
\end{align}
Thus (\ref{eq:(1)initial}) is equivalent to (\ref{eq:(1)final}) and this proves (1). For (2), we again have a chain of equivalences:
\begin{align*}
   \alpha_p^{(n)}-\alpha_p^{(n+1)} &\geq 0\\ 
     \bigg{[}2^{-n-2p}{{n+2p}\choose{p}}\bigg{]}\bigg{(}{ \frac{n}{n+p}{\frac{n+p}{n+2p}}-\frac{(n+1)}{2(n+p+1)}}\bigg{)} &\geq 0\\
     2n^2 +2pn+2n - n^2 -2np -n -2p &\geq 0\\
     \frac{n^2 + n}{2}&\geq p.
\end{align*} For (3), see the appendix to see that the following two inequalities are equivalent:

\begin{align}
     \alpha_p^{(n)} - \alpha_p^{(n+1)} &\leq \alpha_{p+1}^{(n)}-\alpha_{p+1}^{(n+1)}\label{eq:KN_intial}\\  
    p(12n^2+12n-12)-12p^2 &\leq n^4-2n^3-13n^2-10n.
\end{align}
Thus:
\begin{equation*}
    p \leq \frac{n^4-2n^3-13n^2-10n}{12n^2+12n-24}
\end{equation*}
implies (\ref{eq:KN_intial}), and this proves (3). For (4), notice that $K_n \leq J_n$ for all $n\in \mathbb{N}$. Thus,
\begin{align*}
    \left|\alpha_p^{(n+1)}-\alpha_p^{(n)} \right| &= \alpha_p^{(n)}-\alpha_p^{(n+1)}\\ &\leq 
    \alpha_{p+1}^{(n)}-\alpha_{p+1}^{(n+1)}\\ &\leq 
   \left| \alpha_{p+1}^{(n+1)}-\alpha_{p+1}^{(n)}\right|.
\end{align*}
This completes the proof. 
\end{proof}
\begin{remark}
    In the appendix, we construct a stronger version of $K_n$ to be used in Theorem \ref{mean_bound_bound}. The $K_n$ above is sufficient for our purposes at the moment, however.
\end{remark}
These properties of monotonicity will allow us to estimate $\left|\alpha_p^{(n+1)}-\alpha_p^{(n)}\right|$ with sharper precision. Before this, we present a result that will appear in the proof of Theorem \ref{cesaro_square_root_average_bound}. The technique used in this proof motivates an estimate used in Proposition \ref{uniform_p_estimate}. We note here that $[x]$ denotes the integer part of $x + 1$; this is the convention used in \cite{Brunel}.
\begin{prop}\label{product_estimate}
For all $p, n \in \mathbb{N}^+, 1 \leq p \leq n$: \begin{equation}
   P_p^{(n)} \equiv 2^{-([ \sqrt{n}] -1)}\prod_{k = 0}^{[ \sqrt{n}]-2}\frac{2p + [ \sqrt{n}] -1 -k}{p + [ \sqrt{n}] -1 -k}\leq e^{-1/10}.
\end{equation}
\begin{proof}
Note that each component of the product is increasing in $p$. This follows from the following computation:
\[\frac{2p+[\sqrt n]-1-k}{p+[\sqrt n]-1-k}=2-\frac{[\sqrt n]-1-k}{p+[\sqrt n]-1-k},\]
and the term on the right is decreasing in $p$. Since every component of the product is positive, it follows that $P_p^{(n)} \leq P_{n}^{(n)}$ for all $p\leq n$. We have: \begin{align*}
    P_n^{(n)} &= \prod_{k = 0}^{[ \sqrt{n}] -2} \left(1-\frac{{[ \sqrt{n} ] - 1-k}}{2(n + [ \sqrt{n}] -1 -k)}\right)\\
    &\leq \prod_{k = 0}^{[ \sqrt{n}] -2} \left(1-\frac{{[ \sqrt{n} ] - 1-k}}{2(n + [ \sqrt{n}] -1 )}\right)\\
    &= \prod_{k = 0}^{[ \sqrt{n}] -2} \left(1-\frac{{k + 1}}{2(n + [ \sqrt{n}] -1 )}\right).\\
\end{align*}
The last equality follows from reversing the order of terms in the product. 
For $a, b \in \mathbb{R}^+$, we note that: \[(1 - a)(1-b) \leq \left(1 - \frac{a+b}{2}\right)^2.\]
Hence: \begin{align*}
    P_n^{(n)} &\leq \left(1 - \frac{\frac{[ \sqrt{n}]}{2}}{2(n + [\sqrt{n}]-1)}\right)^{[ \sqrt{n}] -1}\\
    &=\left(1 - \frac{[ \sqrt{n}]}{4(n + [\sqrt{n}]-1)}\right)^{[ \sqrt{n}] -1}.
\end{align*}
Then: \begin{align*}
    \log(P_n^{(n)}) &\leq \left([ \sqrt{n}]-1\right)\log\left(1 - \frac{[ \sqrt{n}]}{4(n + [\sqrt{n}]-1)}\right) \\
    &\leq -\frac{[ \sqrt{n}] ^2 - [ \sqrt{n}]}{4(n + [\sqrt{n}]-1)}\\
    & \leq - \frac{[\sqrt{n}]^2- [\sqrt{n}]}{4([\sqrt{n}]^2 + [\sqrt{n}]-1)}\\
    &\leq - \frac{1}{10}.
\end{align*}
This last line follows since the fraction is decreasing in $n$ and $[1] = 2$. Hence $P_p^{(n)} \leq e^{-1/10}< 1$ for all $n, p$, $1\leq p\leq n$. 
\end{proof}
\end{prop}
\begin{prop}\label{uniform_p_estimate}
    The following bound holds for all $n,p\in\mathbb{N}$:
    \begin{equation}\label{eq:prop3_bound}
        \left|\alpha_p^{(n+1)}-\alpha_p^{(n)}\right|\leq\alpha_p^{(1)}\leq\frac{1}{2(1+p)}\sqrt\frac{1}{\pi p}.
    \end{equation}
\end{prop}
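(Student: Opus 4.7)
My plan is to establish $|\alpha_p^{(n+1)}-\alpha_p^{(n)}|\le\alpha_p^{(1)}$ for every $n,p\ge1$ and then apply the classical Stirling inequality $\binom{2p}{p}/4^p\le(\pi p)^{-1/2}\exp(1/(24p))$ to $\alpha_p^{(1)}=\binom{2p}{p}/[2(p+1)\cdot 4^p]$ to recover the stated bound.

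From Lemma~\ref{applied_krengel} a direct ratio computation yields
\[\frac{\alpha_p^{(n+1)}}{\alpha_p^{(n)}}=\frac{(n+1)(n+2p)}{2n(n+1+p)},\qquad\text{whence}\qquad|\alpha_p^{(n+1)}-\alpha_p^{(n)}|=\alpha_p^{(n)}\cdot\frac{|n^2+n-2p|}{2n(n+1+p)}.\]
Combining this with the bound $\alpha_p^{(n)}\le n\alpha_p^{(1)}$ from Theorem~\ref{monotonicity_results}(1) reduces the intermediate claim to showing $|n^2+n-2p|\le 2(n+1+p)$. Elementary algebra shows this holds whenever $n^2+n\le 2p$ (automatic, since it reduces to $(n+1)(n+2)\ge 0$) or $n^2+n>2p$ with $p\ge(n-2)(n+1)/4$. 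Together these subcases cover every pair $(n,p)$ except the ``tail'' region $p<(n-2)(n+1)/4$, which forces $n\ge3$ and $n>2\sqrt{p}$, placing $n$ past the peak of $\alpha_p^{(n)}$ (located near $n\approx\sqrt{2p}$).

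In the tail, the crude inequality $\alpha_p^{(n)}\le n\alpha_p^{(1)}$ is too lossy---$\alpha_p^{(n)}$ itself can exceed $2\alpha_p^{(1)}$ near its peak (for instance $\alpha_{20}^{(10)}/\alpha_{20}^{(1)}\approx 3$)---so the two factors must be controlled jointly. Using the identity $\binom{n+2p-1}{p}/\binom{2p}{p}=\prod_{j=1}^{n-1}(2p+j)/(p+j)$, the target inequality rewrites as
\[\frac{(p+1)|n^2+n-2p|}{(n+p)(n+1+p)}\cdot\frac{2}{2^n}\prod_{j=1}^{n-1}\frac{2p+j}{p+j}\le1.\]
Writing the product as $2^{n-1}\prod_{j=1}^{n-1}\bigl(1-\tfrac{j}{2(p+j)}\bigr)$ and applying $1-x\le e^{-x}$ bounds it above by $2^{n-1}\exp\bigl(-\sum_{j=1}^{n-1}\tfrac{j}{2(p+j)}\bigr)$; comparing this sum with the integral $\int_{0}^{n-1}\tfrac{t}{2(p+t)}\,dt=\tfrac{n-1}{2}-\tfrac{p}{2}\ln\tfrac{p+n-1}{p}$ and exploiting $n>2\sqrt{p}$ yields sufficient exponential decay in $n$ to dominate the polynomial prefactor.

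The principal obstacle is exactly this tail case, where no single-factor estimate succeeds; the proof must exploit cancellation between the polynomial prefactor $(p+1)|n^2+n-2p|/[(n+p)(n+1+p)]$ and the factorial decay of $\binom{n+2p-1}{p}/[2^n\binom{2p}{p}]$ as $n$ pulls away from the peak. Once the intermediate inequality $|\alpha_p^{(n+1)}-\alpha_p^{(n)}|\le\alpha_p^{(1)}$ is in hand, the final step applies the sharp Stirling formula $k!=\sqrt{2\pi k}(k/e)^ke^{\lambda_k}$ with $\tfrac{1}{12k+1}<\lambda_k<\tfrac{1}{12k}$ to $(2p)!/(p!)^2$, giving $\binom{2p}{p}/4^p\le(\pi p)^{-1/2}\exp(1/(24p))$; combined with the intermediate step this yields the proposition.
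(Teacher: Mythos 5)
Your outer strategy coincides with the paper's: both reduce the proposition to the intermediate inequality $|\alpha_p^{(n+1)}-\alpha_p^{(n)}|\le\alpha_p^{(1)}$ and then apply the Stirling bounds \eqref{eq:prop3_ineq_chain} to $\alpha_p^{(1)}=\frac{1}{2(p+1)}4^{-p}\binom{2p}{p}$; that final step is identical to the paper's and is correct. Your route to the intermediate inequality, however, is genuinely different: the exact ratio $\alpha_p^{(n+1)}/\alpha_p^{(n)}=(n+1)(n+2p)/\bigl(2n(n+1+p)\bigr)$ combined with $\alpha_p^{(n)}\le n\alpha_p^{(1)}$ disposes of every pair with $p\ge(n-2)(n+1)/4$, whereas the paper splits on the sign of $\alpha_p^{(n+1)}-\alpha_p^{(n)}$, using subadditivity (Theorem \ref{monotonicity_results}(3)) in one case and a monotonicity-in-$n$ argument together with the recurrence \eqref{eq:alpha_recurrence} in the other, so it never has to confront a ``tail'' region at all. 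A small slip: the correct normalization of your displayed target is $\frac{(p+1)|n^2+n-2p|}{(n+p)(n+1+p)}\,2^{-n}\prod_{j=1}^{n-1}\frac{2p+j}{p+j}\le1$, not with the factor $2\cdot 2^{-n}$; your version is twice the true quantity, hence a sufficient but needlessly strong target that halves your margin.

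The genuine gap is the tail case $p<(n-2)(n+1)/4$, which is the crux, and there your argument is both incomplete and inaccurately described. After $1-x\le e^{-x}$ and the integral comparison, what remains to be shown (in your normalization) is that $\frac{(p+1)(n^2+n-2p)}{(n+p)(n+1+p)}\exp\!\left(-\frac{n-1}{2}+\frac p2\ln\frac{p+n-1}{p}\right)\le1$ uniformly on the tail. But along $p\approx\beta n^2$ with $0<\beta<\tfrac14$ --- and this describes the entire tail, not an exceptional corner of it --- the exponent tends to the \emph{constant} $\tfrac{1}{4\beta}$, so there is no ``exponential decay in $n$'' available to dominate anything; meanwhile the prefactor tends to $(1-2\beta)/\beta$, and the product approaches $(4t-2)e^{-t}$ with $t=1/(4\beta)\ge1$, whose maximum is $4e^{-3/2}\approx0.89$, attained near $p\approx n^2/6$ rather than at the boundary $n\approx2\sqrt p$. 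So the inequality you need is true, but only by a modest numerical margin that your sketch neither identifies nor verifies, and controlling the finite-$n$ corrections in this two-parameter optimization is exactly the work that is missing; as written, the hardest case of the proposition is asserted rather than proved. Either carry out that optimization explicitly (restoring the correct factor $2^{-n}$ doubles your room and makes it comfortable), or handle the regime $p\le J_n$ the way the paper does, via monotonicity of the successive differences in $n$ together with Lemma \ref{alpha_reurrence}.
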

\begin{proof}
    The result is trivial if $p\geq J_n$, since in this case we have:
    \[\left|\al{p}{n+1}-\al{p}{n}\right|=\al{p}{n+1}-\al{p}{n}\leq\al{p}{n}+\al{p}{1}-\al{p}{n}=\al{p}{1},\]
    using the subadditivity of the coefficients $\al{p}{n}$. For $p\leq J_n$, consider the quotient $\left(\al{p}{n}-\al{p}{n+1}\right)/\al{p}{1}$; the fraction is necessarily positive, since $p\leq J_n$. We claim that this quotient is bounded uniformly in $n$ and in $p$ by $1$, which suffices to prove the claim. This is easily checked for $1\leq n\leq 4$, as there are only finitely many values of $p$ to test, so for the remainder of the proof suppose that $n\geq 5$. Substituting the definition of $\al{p}{n}$ and simplifying, we obtain:
    \begin{align*}
        \frac{\al{p}{n}-\al{p}{n+1}}{\al{p}{1}}&=\frac{\frac{n}{n+2p}2^{-(n+2p)}\binom{n+2p}{p}-\frac{n+1}{n+2p+1}2^{-(n+2p+1)}\binom{n+2p+1}{p}}{\frac{1}{1+2p}2^{-(1+2p)}\binom{1+2p}{p}} \\
        &=\frac{2^{-(n+2p)}\binom{n+2p}{p}\left[\frac{n}{n+2p}-\frac{n+1}{2(n+p+1)}\right]}{\frac{1}{2(1+2p)}4^{-p}\binom{1+2p}{p}} \\
        &=2^{-n}\left(\prod_{j=0}^{n-2}\frac{n+2p-j}{n+p-j}\right)\frac{(n^2+n-2p)(1+2p)}{(n+2p)(n+p+1)}.
    \end{align*}
    Rewrite the product as:
    \[\prod_{j=0}^{n-2}\frac{n+2p-j}{n+p-j}=\prod_{j=0}^{n-2}\left(1+\frac{p}{n+p-j}\right).\]
    It is easily checked that for $x\in[0,n-2]$, the map $x\mapsto 1+p/(n+p-x)$ is convex. Let $\ell:[0,n-2]\to\mathbb{R}$ be the line segment between $1+p/(n+p)$ and $1+p/(2+p)$. Then $1+p/(n+p-x)\leq\ell(x)$ for $x\in[0,n-2]$; as in the proof of Proposition \ref{product_estimate}, repeatedly applying the inequality $(1+x)(1+y)\leq\big(1+(x+y)/2\big)^2$ gives:
    \[\prod_{j=0}^{n-2}\left(1+\frac{p}{n+p-j}\right)\leq\prod_{j=0}^{n-2}\ell(j)\leq\left[1+\frac{p}{2}\left(\frac{1}{n+p}+\frac{1}{2+p}\right)\right]^{n-1}.\]
    Hence, the quotient is bounded by:
    \begin{align*}
        \frac{\al{p}{n}-\al{p}{n+1}}{\al{p}{1}}&\leq 2^{-n}\left[1+\frac{p}{2}\left(\frac{1}{n+p}+\frac{1}{2+p}\right)\right]^{n-1}\frac{(n^2+n-2p)(1+2p)}{(n+2p)(n+p+1)} \\
        &\leq2^{-n}\left[1+\frac{p}{2}\left(\frac{1}{n+p}+\frac{1}{2+p}\right)\right]^{n-1}\frac{n^2+n-2p}{n+p+1} \\
        &\leq2^{-n}\left(1+\frac{n+2p}{2(n+p)}\right)^{n-1}\frac{n^2+n-2p}{n+p+1} \\
        &=2\cdot 4^{-n}\left(4-\frac{n}{n+p}\right)^{n-1}\frac{n^2+n-2p}{n+p+1} \\
        &\leq2\cdot 4^{-n}\left(4-\frac{n}{n+p}\right)^{n-1}\frac{n^2+n}{n+p}.
    \end{align*}
    We want to maximize this quantity in $p$. Define $g:[1,\infty)\times\mathbb{N}\to[0,\infty)$ by:
    \[g(x,n)=4^{-n}\left(4-\frac{n}{n+x}\right)^{n-1}\frac{n^2+n}{n+x}.\]
    Differentiating this quantity gives:
    \begin{align*}
        \frac{\partial g}{\partial x}(x,n)&=4^{-n}\left[\frac{n(n-1)}{(n+x)^2}\left(4-\frac{n}{n+x}\right)^{n-2}\frac{n^2+n}{n+x}-\left(4-\frac{n}{n+x}\right)^{n-1}\frac{n^2+n}{(n+x)^2}\right] \\
        &=4^{-n}\frac{n^2+n}{(n+x)^2}\left(4-\frac{n}{n+x}\right)^{n-2}\left(\frac{n(n-1)}{n+x}+\frac{n}{n+x}-4\right) \\
        &=4^{-n}\frac{n^2+n}{(n+x)^2}\left(4-\frac{n}{n+x}\right)^{n-2}\left(\frac{n^2}{n+x}-4\right).
    \end{align*}
    Note that all terms in the derivative are always positive except for the last term. There is precisely one point $p_0$ (not necessarily an integer) for which this partial derivative vanishes:
    \[\frac{\partial g}{\partial x}(p_0,n)=0\implies n^2-4n-4p_0=0\implies p_0=\frac{1}{4}\left(n^2-4n\right).\]
    Substituting back into our original bound on the quotient, we find that:
    \begin{align*}
        \frac{\al{p}{n}-\al{p}{n+1}}{\al{p}{1}}&\leq 2\cdot4^{-n}\left(4-\frac{n}{n+\frac{1}{4}\left(n^2-4n\right)}\right)^{n-1}\frac{n^2+n}{n+\frac{1}{4}\left(n^2-4n\right)} \\
        &=2\cdot4^{-n}\left(4-\frac{4}{n}\right)^{n-1}\left(4+\frac{4}{n}\right) \\
        &=2\left(1-\frac{1}{n}\right)^{n-1}\left(1+\frac{1}{n}\right) \\
        &=2\left(1-\frac{1}{n}\right)^n\left(1+\frac{2}{n-1}\right).
    \end{align*}
    This upper bound is decreasing in $n$, and for $n\geq 5$ is less than or equal to $1$, so we have for all $p,n\in\mathbb{N}$:
    \[\frac{\al{p}{n}-\al{p}{n+1}}{\al{p}{1}}\leq1.\]
    Hence we have for all $p,n\in\mathbb{N}$ that:
    \[\left|\al{p}{n+1}-\al{p}{n}\right|\leq\al{p}{1}.\]
    This completes the proof of the first inequality. Now to prove the second inequality, we apply the estimate from \cite{Hirschhorn} to obtain:
    \[\left|\al{p}{n+1}-\al{p}{n}\right|\leq\frac{1}{2(1+p)}4^{-p}\binom{2p}{p}\leq\frac{1}{2(1+p)}\sqrt\frac{1}{\pi p}.\]
    This completes the proof.
\end{proof}
\begin{prop}\label{uniform_convergence_prop}
    For $x\in[-1,1]$ and for $n\in\mathbb{N}$, the following power series is uniformly convergent in both $n$ and $x$:
    \[\sum_{p=0}^\infty\left(\alpha_p^{(n+1)}-\alpha_p^{(n)}\right)x^p.\]
\end{prop}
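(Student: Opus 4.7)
The plan is to apply the Weierstrass M-test with a majorant sequence $\{M_p\}$ that is independent of both $n$ and $x$. The key input is already in place: Proposition \ref{uniform_p_estimate} bounds $\left|\alpha_p^{(n+1)}-\alpha_p^{(n)}\right|$ by a quantity independent of $n$, so it remains only to absorb the factor $|x|^p$ on $[-1,1]$ and verify summability.

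First I would separate out the $p=0$ term. By Lemma \ref{applied_krengel} we have $\alpha_0^{(n)}=2^{-n}$, so $\left|\bigl(\alpha_0^{(n+1)}-\alpha_0^{(n)}\bigr)x^0\right|=2^{-(n+1)}\leq \tfrac{1}{2}=:M_0$. For $p\geq 1$, combining Proposition \ref{uniform_p_estimate} with the trivial bounds $|x|^p\leq 1$ on $[-1,1]$ and $\exp(1/(24p))\leq e^{1/24}$ yields
\[\left|\bigl(\alpha_p^{(n+1)}-\alpha_p^{(n)}\bigr)x^p\right|\leq \frac{e^{1/24}}{2\sqrt{\pi}}\cdot\frac{1}{(1+p)\sqrt{p}}=:M_p,\]
which depends on neither $n$ nor $x$.

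Since $M_p=O(p^{-3/2})$, the series $\sum_{p=0}^\infty M_p$ converges by comparison with the $p$-series $\sum p^{-3/2}$. The Weierstrass M-test then delivers uniform convergence of $\sum_{p=0}^\infty\bigl(\alpha_p^{(n+1)}-\alpha_p^{(n)}\bigr)x^p$ on $[-1,1]$, with a rate independent of $n\in\mathbb{N}$, which is exactly the desired conclusion.

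I do not anticipate any real obstacle: all of the delicate work has already been carried out in Proposition \ref{uniform_p_estimate}, and what remains is a routine majorant computation. The only mild subtlety is that the bound in Proposition \ref{uniform_p_estimate} degenerates at $p=0$, but the $p=0$ term is trivially controlled using the explicit value $\alpha_0^{(n)}=2^{-n}$ from Lemma \ref{applied_krengel}.
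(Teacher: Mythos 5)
Your proposal is correct and follows essentially the same route as the paper: both treat the $p=0$ term separately, bound the remaining terms via Proposition \ref{uniform_p_estimate} uniformly in $n$ and $x$, and conclude by the Weierstrass M-test with a majorant comparable to $\sum_{p\geq1}p^{-3/2}$. Your explicit computation $\left|\alpha_0^{(n+1)}-\alpha_0^{(n)}\right|=2^{-(n+1)}\leq\tfrac{1}{2}$ is a clean (indeed cleaner) version of the paper's $p=0$ estimate, so nothing further is needed.
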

\begin{proof}
    Applying the results of Proposition \ref{uniform_p_estimate}, we have:
    \[\left|\left(\alpha_p^{(n+1)}-\alpha_p^{(n)}\right)x^p\right|\leq\frac{1}{2(1+p)}\sqrt{\frac{1}{\pi p}}\]
    for all $x\in[-1,1]$ and for all $n,p\in\mathbb{N}$. Additionally, we have:
    \[\left|\alpha_0^{(n+1)}-\alpha_0^{(n)}\right|=\left|2^{-(n+1)}-2^{-n}\right|\leq1,\]
    and hence:
    \[\left|\left(\alpha_p^{(n+1)}-\alpha_p^{(n)}\right)x^p\right|\leq\begin{cases}
    1, & p=0, \\
    \frac{1}{2(1+p)}\sqrt{\frac{1}{\pi p}}, & p\neq0. \\
    \end{cases}\]
    Define the sequence $(M_p)_{p=0}^\infty$ by:
    \[M_p=\begin{cases}
    1, & p=0, \\
    \frac{1}{2(1+p)}\sqrt{\frac{1}{\pi p}}, & p\neq0. \\
    \end{cases}\]
    Then $\left|\left(\alpha_p^{(n+1)}-\alpha_p^{(n)}\right)x^p\right|\leq M_p$ for each $n\in\mathbb{N}$ and $x\in[-1,1]$, and:
    \[\sum_{p=0}^\infty M_p\leq1+\sum_{p=1}^\infty\frac{1}{p^{3/2}}<\infty,\]
    so by the Weierstrass M-test the original sum converges uniformly in both $n$ and $x$.
\end{proof}

\section{Estimates on Coefficient Summations}
In this section we prove several technical estimates regarding summations of the coefficients $\alpha_p^{(n)}$. These estimates facilitate the proofs of the theorems in Section 5.
\begin{thm}\label{sum_coefficient_bound}
The following inequality holds:
\begin{equation}\label{main}
    \sup_{n \in \mathbb{N}} n\sum_{p=0}^\infty \left|\alpha_p^{(n+1)}-\alpha_p^{(n)}\right| \leq\frac{33}{2}.
\end{equation}
\end{thm}
\begin{proof}
Note that $K_n=0$ precisely when $n=5$, $\lfloor K_6\rfloor=0$, and $\floor{K_n}>0$ when $n>6$. Hence, in order to avoid division by zero, we consider two separate cases: when $n>6$ and when $n\leq 6$. For the first case, fix $n\in \mathbb{N}$ such that $n>6$. Expanding the left hand side of \eqref{main}, we have:

\begin{equation}\label{sum_expand}
   \sum_{p=0}^{\infty} |\alpha_p^{(n+1)} - \alpha_p^{(n)}|
= (\frac{1}{2^n}-\frac{1}{2^{n+1}})+\sum_{p=1}^{\lfloor K_n\rfloor} |\alpha_p^{(n+1)} - \alpha_p^{(n)}| + \sum_{p=\lfloor K_n\rfloor+1}^{\infty} |\alpha_p^{(n+1)} - \alpha_p^{(n)}|.
\end{equation}
By applying Lemma \ref{ijk_lemma} (4):
\begin{equation*}
    \sum_{p=1}^{\lfloor K_n\rfloor} |\alpha_p^{(n+1)} - \alpha_p^{(n)}|
      \leq K_n(\alpha_{ \lfloor K_n \rfloor}^{(n)}-\alpha_{\lfloor K_n \rfloor}^{(n+1)}). 
\end{equation*}
By Proposition \ref{uniform_p_estimate}, we have that, for $p \geq 1$: \begin{equation*}
    \left|\alpha_p^{(n+1)}-\alpha_p^{(n)}\right|\leq\frac{1}{2(1+p)}\sqrt\frac{1}{\pi p}\leq \frac{1}{p^{3/2}}.
\end{equation*}
Now, the first term of the right-hand-side of (\ref{sum_expand}) tends to zero at a rate of $2^{-(n+1)}$, faster than $C/n$ for any constant $C$. Expanding the second term, we have:
\begin{equation}\label{K_n(alpha_bound)}
      K_n (\alpha_{K_n}^{(n)}-\alpha_{K_n}^{(n+1)}) \leq \frac{K_n}{{K_n}^{3/2}} = \frac{1}{\sqrt{K_n}} \leq \frac{1}{\sqrt{\frac{n^4}{12n^2}}} = \frac{\sqrt{12}}{n}.
\end{equation}
The third term can be bounded using properties of $p$-sums. If $n > 6$, we have the following:
\begin{equation*}
        \sum_{p=\lfloor K_n\rfloor+1}^{\infty} \left| \alpha_p^{(n)}-\alpha_p^{(n+1)} \right| \leq \sum_{p=\lfloor K_n\rfloor+1}^{\infty}\frac{1}{p^{3/2}}.
\end{equation*}
Using the integral test, we can bound this estimate as follows:
\begin{equation*}
        \sum_{p=\lfloor K_n\rfloor+1}^{\infty}\frac{1}{p^{3/2}} \leq \int_{\lfloor K_n\rfloor}^{\infty} \frac{1}{x^{3/2}}dx = \frac{2}{\sqrt{\lfloor K_n\rfloor}} \leq  \frac{2\sqrt{12}}{n},
\end{equation*}
and combining the above estimates yields:
\begin{equation*}
    \sum_{p=0}^\infty |\alpha_p^{(n)}-\alpha_p^{(n+1)}| \leq (\frac{1}{2^n}-\frac{1}{2^{n+1}}) + (\frac{\sqrt{12}}{n} + \frac{2\sqrt{12}}{n}) \leq  \frac{7\sqrt{3}}{n}.
\end{equation*}
Now, consider: \begin{equation*}
     D_n =\sum_{p=0}^\infty |\alpha_p^{(n)} - \alpha_p^{(n+1)}| \leq \frac{3}{4} + \sum_{p=2}^\infty |\alpha_p^{(n)} - \alpha_p^{(n+1)}|
 \end{equation*}
 for all $1\leq n \leq 6$. Using the integral test again, we get: \begin{equation*}
     D_n \leq \frac{3}{4} + \int_1^\infty \frac{1}{x^{3/2}}dx = \frac{11}{4}.
 \end{equation*}
Clearly, then: \begin{equation*}
    D_n \leq \frac{6\cdot11}{4n}.
\end{equation*}
Letting $C = 33/2$, we are done. 

\end{proof}


\begin{cor}\label{uniform_bound_corollary}
For all $n\geq 1$ and all x $\in [0,1]$, $n|\psi(x)^{n+1}-\psi(x)^n|$ is uniformly bounded. 
\end{cor}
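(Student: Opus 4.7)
The plan is that this corollary is essentially an immediate consequence of Theorem \ref{sum_coefficient_bound} combined with the Taylor series representation established in Lemma \ref{applied_krengel}. Since the hard analytic work has already been done, the proof is just a short application of the triangle inequality.

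First, I would invoke Lemma \ref{applied_krengel}, which guarantees that for every $n \in \mathbb{N}$ and every $x \in [-1,1]$ (in particular $x \in [0,1]$), we have the absolutely convergent expansions
\[
\psi(x)^n = \sum_{p=0}^\infty \alpha_p^{(n)} x^p, \qquad \psi(x)^{n+1} = \sum_{p=0}^\infty \alpha_p^{(n+1)} x^p.
\]
Since both series converge absolutely, I can subtract termwise to obtain
\[
\psi(x)^{n+1} - \psi(x)^n = \sum_{p=0}^\infty \bigl(\alpha_p^{(n+1)} - \alpha_p^{(n)}\bigr) x^p.
\]

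Next, for $x \in [0,1]$ we have $0 \leq x^p \leq 1$ for every $p \geq 0$, so the triangle inequality gives
\[
\bigl|\psi(x)^{n+1} - \psi(x)^n\bigr| \leq \sum_{p=0}^\infty \bigl|\alpha_p^{(n+1)} - \alpha_p^{(n)}\bigr| |x|^p \leq \sum_{p=0}^\infty \bigl|\alpha_p^{(n+1)} - \alpha_p^{(n)}\bigr|.
\]
Finally, I would apply Theorem \ref{sum_coefficient_bound} to bound the right-hand side by $C/n$, whence multiplying through by $n$ yields $n\bigl|\psi(x)^{n+1} - \psi(x)^n\bigr| \leq C$ uniformly in $n \geq 1$ and $x \in [0,1]$.

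There is no real obstacle here; the only minor point to be careful about is making sure the termwise subtraction is legitimate, which is immediate from the absolute convergence guaranteed by Lemma \ref{applied_krengel}. All the delicate coefficient estimates are hidden inside Theorem \ref{sum_coefficient_bound}, so the corollary itself is just a clean corollary in the literal sense of the word.
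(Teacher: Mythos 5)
Your proposal is correct and follows essentially the same route as the paper: expand $\psi(x)^{n+1}-\psi(x)^n$ termwise via the Taylor coefficients, bound $|x|^p\leq1$, and apply Theorem \ref{sum_coefficient_bound} to get the $C/n$ bound. Nothing further is needed.
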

\begin{proof}
Since:
\begin{equation*}
    |\psi(x)^{n+1}-\psi(x)^n|\leq \sum_{p=0}^\infty |\alpha_p^{(n+1)}-\alpha_p^{(n)}||x^p| \leq \sum_{p=0}^\infty |\alpha_p^{(n+1)}-\alpha_p^{(n)}| \leq \frac{33}{2n},
\end{equation*} the result follows. 
\end{proof}

See Figure 2 for a graphical representation of $|\psi(x)^n-\psi(x)^{n+1}|$.

\begin{figure}[htp]
    \centering
    \includegraphics[width=10cm]{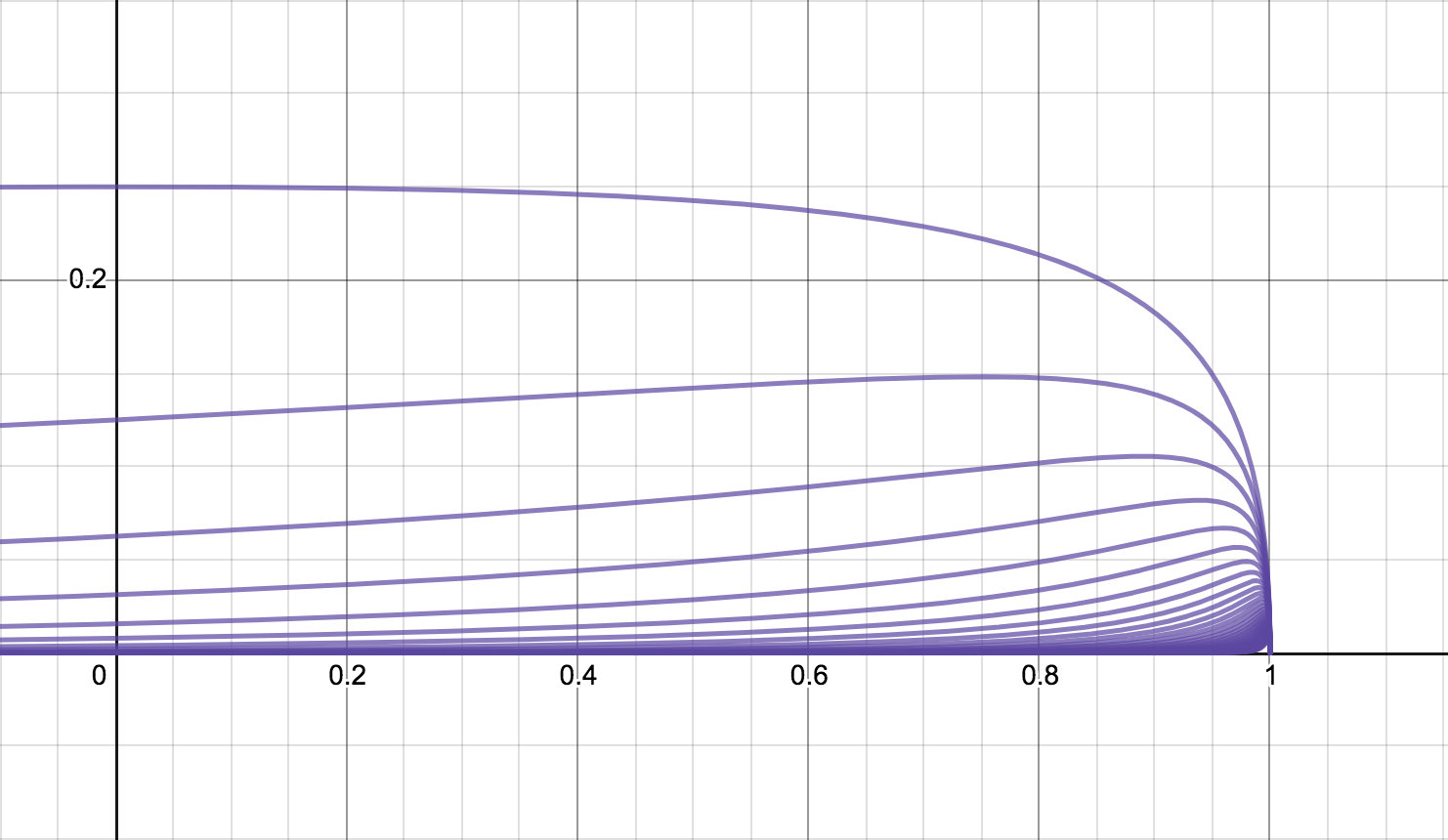}
    \caption{Graph of ($x$, $|\psi(x)^n-\psi(x)^{n+1}| $) for $n$ = 1, 2, ..., 40. Graphed with desmos. }
    \label{fig:Thm1}
\end{figure}

\begin{cor}\label{uniform_bound_corollary_2}
$|(n+1)\psi(x)^{n+1} -n\psi(x)^{n}| < 35/2 $ for all $n\geq{1}$ and for all $x\in[-1,1]$.
\end{cor}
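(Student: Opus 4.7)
The plan is to rewrite the expression $(n+1)\psi(x)^{n+1}-n\psi(x)^n$ as a telescoping-style decomposition that separates the scaled difference from a single power of $\psi$, namely
\[
(n+1)\psi(x)^{n+1}-n\psi(x)^n \;=\; n\bigl(\psi(x)^{n+1}-\psi(x)^n\bigr)+\psi(x)^{n+1},
\]
and then bound the two terms independently. The first term is directly controlled by Corollary~\ref{uniform_bound_corollary}; I would remark that, although Corollary~\ref{uniform_bound_corollary} is stated for $x\in[0,1]$, its proof goes through verbatim for $x\in[-1,1]$ because the estimate from Theorem~\ref{sum_coefficient_bound} bounds $\sum_p|\alpha_p^{(n+1)}-\alpha_p^{(n)}|$ independently of $x$, and $|x^p|\leq 1$ on the larger interval as well. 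This gives $n\bigl|\psi(x)^{n+1}-\psi(x)^n\bigr|\leq C$ on $[-1,1]$.

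Next I would handle $|\psi(x)^{n+1}|$ by showing $|\psi(x)|\leq 1$ for every $x\in[-1,1]$. This follows from nonnegativity of the coefficients $\alpha_p^{(1)}$ (visible from the formula in Lemma~\ref{applied_krengel}) together with the identity $\psi(1)=1$, which yields $\sum_{p=0}^\infty\alpha_p^{(1)}=1$. Hence for $x\in[-1,1]$,
\[
|\psi(x)|\;\leq\;\sum_{p=0}^\infty\alpha_p^{(1)}|x|^p\;\leq\;\sum_{p=0}^\infty\alpha_p^{(1)}\;=\;1,
\]
so $|\psi(x)^{n+1}|\leq 1$ for all $n\geq 1$.

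Combining these two bounds via the triangle inequality gives $|(n+1)\psi(x)^{n+1}-n\psi(x)^n|\leq C+1$ uniformly in $n$ and $x\in[-1,1]$, as desired. There is essentially no obstacle here: the only subtlety is the (minor) extension of Corollary~\ref{uniform_bound_corollary} from $[0,1]$ to $[-1,1]$, which is automatic from the $x$-independent bound in Theorem~\ref{sum_coefficient_bound}. The rest is an algebraic rearrangement followed by the observation that $\psi$ is a probability generating function on $[-1,1]$.
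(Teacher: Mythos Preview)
Your proof is correct and follows essentially the same approach as the paper: both use the identity $(n+1)\psi^{n+1}-n\psi^n=n(\psi^{n+1}-\psi^n)+\psi^{n+1}$, bound the first term by $C$ via Theorem~\ref{sum_coefficient_bound}, and bound the second by $1$ using $\sum_p\alpha_p^{(n+1)}=1$. The only difference is cosmetic---the paper performs the split at the coefficient level inside a single series, whereas you split at the function level and invoke Corollary~\ref{uniform_bound_corollary}; your explicit remark extending that corollary from $[0,1]$ to $[-1,1]$ is a nice bit of extra care.
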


\begin{proof}
By definition:
 \begin{equation*}
        |(n+1)\psi(x)^{n+1} -n\psi(x)^{n}| \leq \bigg{|}\sum_{p=0}^\infty n(\alpha_p^{(n+1)} -\alpha_p^{(n)}) + \alpha_p^{n+1}\bigg{|}
        \leq\frac{33}{2}+1=\frac{35}{2},
 \end{equation*}
so the result follows.
\end{proof}

Figure 3 here gives a visualization of why the difference does not go to zero. We see an issue as we approach $x =1$. 

\begin{figure}[htp]
    \centering
    \includegraphics[width=10cm]{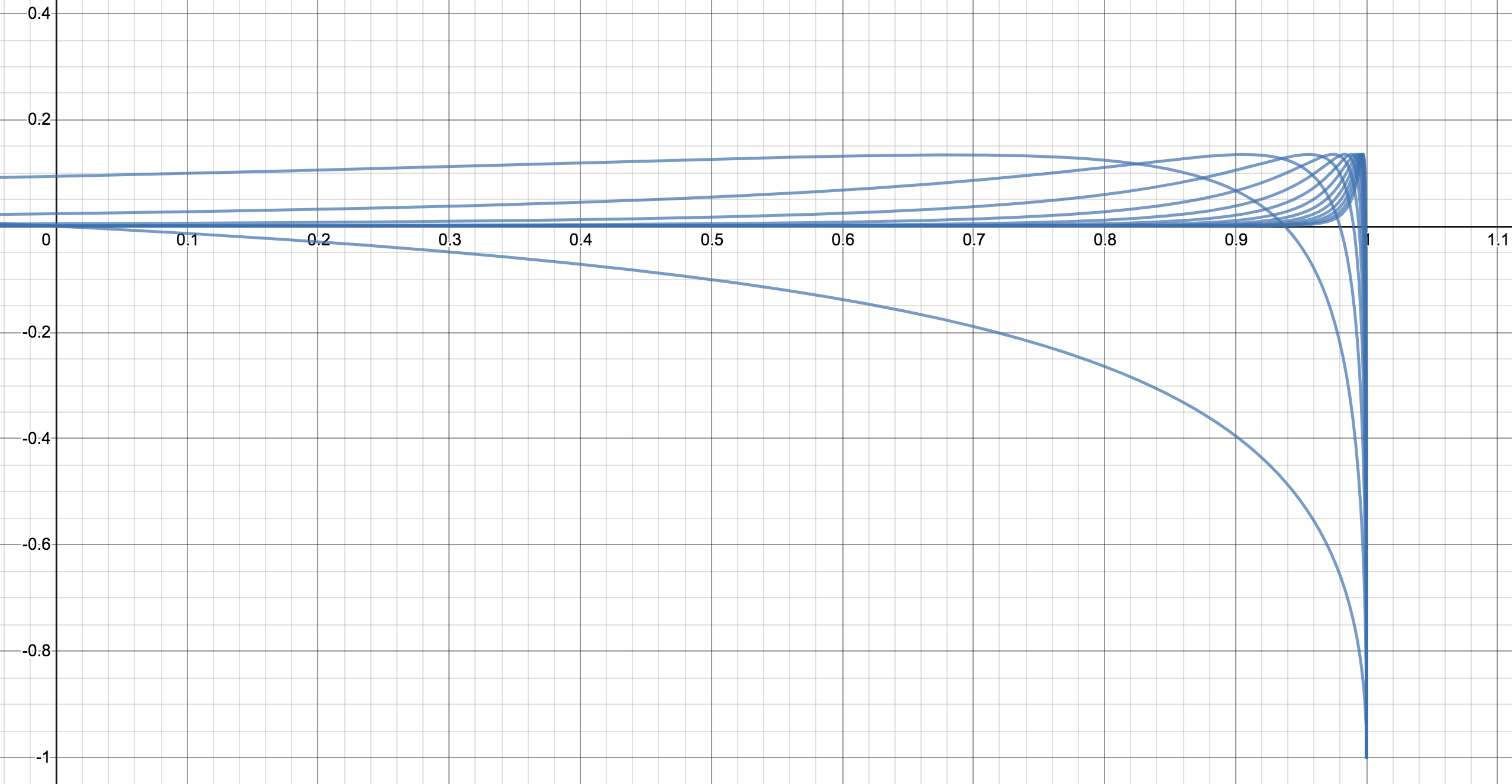}
    \caption{Graph of ($x$, $n\psi(x)^n-(n+1)\psi(x)^{n+1} $) for $n$ = 1, 2, ..., 40. Note that as $x$ $\to$ 1, the function does not approach 0. Graphed with desmos. }
    \label{fig:Var1}
\end{figure}

We now state a more precise version of $K_n$ as defined in Lemma \ref{ijk_lemma} (3).
\begin{lem}\label{Upper_Lower_Kn}
    Consider the zeros of the following function $f_n$:
    \[f_n(p) = p(12n^2 + 12n -12)-12p^2 - (n^4 -2n^3-13n^2 -10n).\]
    Denote $\underline{K_n}$ and $\overline{K_n}$ as the smaller and larger roots, respectively. Then, for all $n \in \mathbb{N}$, we have \begin{enumerate}
        \item $\alpha_p^{(n)} - \alpha_p^{(n+1)} \leq \alpha_{p+1}^{(n)}-\alpha_{p+1}^{(n+1)}$ if and only if $p \leq \underline{K_n}$ or $p \geq \overline{K_n}$.
        \item $\alpha_p^{(n)} - \alpha_p^{(n+1)} \geq \alpha_{p+1}^{(n)}-\alpha_{p+1}^{(n+1)}$ if and only if $\underline{K_n} \leq p \leq \overline{K_n}$.
    \end{enumerate}
\end{lem}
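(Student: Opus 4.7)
The plan is to reduce the lemma directly to the quadratic inequality already established in the proof of Lemma \ref{ijk_lemma}(3), and then to read off the two claims from the geometry of a downward-opening parabola. First I would recall that in the course of proving Lemma \ref{ijk_lemma}(3), the inequality
\[\alpha_p^{(n)} - \alpha_p^{(n+1)} \leq \alpha_{p+1}^{(n)}-\alpha_{p+1}^{(n+1)}\]
was shown (via the chain of equivalences deferred to the appendix) to be equivalent to
\[p(12n^2+12n-12) - 12p^2 \leq n^4 - 2n^3 - 13n^2 - 10n,\]
which is exactly $f_n(p) \leq 0$. This equivalence is the only non-trivial analytic content needed, and it is already available in the excerpt.

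Next I would observe that $f_n(p)$ is a polynomial of degree two in $p$ with leading coefficient $-12 < 0$, so its graph is a downward-opening parabola in $p$. Under the hypothesis that both roots $\underline{K_n} \leq \overline{K_n}$ are positive, we have the classical sign pattern
\[f_n(p) \leq 0 \iff p \leq \underline{K_n} \text{ or } p \geq \overline{K_n}, \qquad f_n(p) \geq 0 \iff \underline{K_n} \leq p \leq \overline{K_n}.\]
Combining this with the equivalence from the previous paragraph immediately yields statement (1). Statement (2) follows by taking the reverse inequality throughout, which swaps $f_n(p) \leq 0$ for $f_n(p) \geq 0$ and hence swaps the complement of $[\underline{K_n}, \overline{K_n}]$ for $[\underline{K_n}, \overline{K_n}]$ itself.

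There is essentially no obstacle in this proof beyond citing the algebraic reduction already carried out in Lemma \ref{ijk_lemma}(3). The one small item worth explicitly flagging is that the earlier $K_n$ from Lemma \ref{ijk_lemma}(3) was obtained by dividing both sides of the quadratic inequality by $12n^2+12n-24$ after completing-the-square style manipulation, which only exposed a single linear threshold; here we instead retain $f_n$ as a genuine quadratic and extract both of its roots, giving a sharper two-sided description. Thus the present lemma is strictly a refinement of Lemma \ref{ijk_lemma}(3), obtained by not throwing away the second root of $f_n$, and its proof amounts to one appeal to the appendix computation plus one elementary observation about the sign of a concave quadratic on $[0,\infty)$.
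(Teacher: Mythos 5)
Your proposal is correct and follows essentially the same route as the paper: both rest on the equivalence, established in the appendix computation for Lemma \ref{ijk_lemma}(3), between $\alpha_p^{(n)}-\alpha_p^{(n+1)}\leq\alpha_{p+1}^{(n)}-\alpha_{p+1}^{(n+1)}$ and $f_n(p)\leq0$, and then read off the two claims from the root structure of the quadratic; your appeal to concavity is simply a more direct way of getting the sign pattern than the paper's case analysis around $J_n$ with the explicit quadratic formula. (One small aside: the original $K_n$ in Lemma \ref{ijk_lemma}(3) comes from discarding the $-12p^2$ term via $12p^2\geq12p$ for $p\geq1$, not from dividing by $12n^2+12n-24$ after a completing-the-square step, but this does not affect your argument.)
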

\begin{proof}
    See the appendix for details.
\end{proof}
The following proposition gives an upper bound on the growth rate of the sum $\sum_{p=0}^\infty\left|\alpha_{p+1}^{(n)}-\alpha_p^{(n)}\right|(p+1)$ as $n\to\infty$ and is crucial to the proof of Theorem \ref{mean_bound_bound}. This theorem was proved in \cite{Brunel} without an explicit constant in the upper bound; we provide a full proof of this theorem (with the explicit constant) in the appendix.
\begin{thm}\label{sup_2_bound}
The following inequality holds:
\begin{equation*}
   \sup_{n \in \mathbb{N}} \sum_{p=0}^\infty |\alpha_p^{(n)} - \alpha_{p+1}^{(n)}|(p+1) \leq 3 + \frac{5}{\sqrt{\pi}}. 
\end{equation*}
\end{thm}

\begin{thm}\label{sup_3_bound}
Let $\zeta_p^{(n)} = \alpha_{p}^{(n)} - \alpha_{p}^{(n+1)}$. Then \begin{equation}
    \sup_{n \in \mathbb{N}} n \sum_{p = 0}^\infty |\zeta_p^{(n)} - \zeta_{p+1}^{(n)}| (p+1)\leq33+4\sqrt3.
\end{equation}
\end{thm}
\begin{proof}
 Fix $n, N \in \mathbb{N}$, $n > 6$ such that $N > \overline{K_n}$. In the proof we will use known properties of $K_n$, and splitting into the cases when $n>6$ and $n\leq 6$ will help us avoid problems with dividing by $0$.  Set $\Delta_p^{(n)} = \zeta_{p+1}^{(n)}-\zeta_{p}^{(n)}$. Summing by parts, we have:

\begin{align*}
    \sum_{p=0}^N |\Delta_p^{(n)}|(p+1) & =  \underbrace{(N+1) \sum_{p=0}^N |\Delta_p^{(n)}|}_{(\dagger)}- \underbrace{\sum_{p =0}^{N-1}\sum_{\ell = 0}^p |\Delta_\ell^{(n)}|}_{(\dagger\dagger)}. 
\end{align*}

For the first sum, we have: \begin{align*}
    (\dagger) &= (N+1)\left(\sum_{p=0}^{\lfloor\underline{K_n}\rfloor}\Delta_p^{(n)} - \sum_{p=\lfloor\underline{K_n}\rfloor+1}^{\lfloor \overline{K_n}\rfloor}\Delta_p^{(n)} + \sum_{p=\lfloor \overline{K_n}\rfloor+1}^{N} \Delta_p^{(n)}\right)\\
    &= (N+1)\left(2\zeta^{(n)}_{\lfloor\underline{K_n}\rfloor+1}+ \zeta^{(n)}_{N+1} - 2\zeta^{(n)}_{\lfloor\overline{K_n}\rfloor+1}-\zeta^{(n)}_{0}\right)
\end{align*}

For the second sum, we have: \begin{align*}
     (\dagger\dagger) & =\sum_{p=0}^{\lfloor\underline{K_n}\rfloor}\sum_{\ell = 0}^p \Delta_{\ell}^{(n)} + \sum_{p=\lfloor\underline{K_n}\rfloor+1}^{\lfloor \overline{K_n}\rfloor}\left(\sum_{\ell=0}^{\lfloor \underline{K_n}\rfloor} \Delta_\ell^{(n)} - \sum_{\ell = \lfloor \underline{K_n}\rfloor+1}^p\Delta_\ell^{(n)}\right) \\
     &\hspace{0.5in }+ \sum_{p=\lfloor \overline{K_n}\rfloor+1}^{N} \left(\sum_{\ell=0}^{\lfloor \underline{K_n}\rfloor} \Delta_\ell^{(n)} - \sum_{\ell = \lfloor \underline{K_n}\rfloor+1}^{\lfloor \overline{K_n} \rfloor}\Delta_\ell^{(n)} + \sum_{\ell = \lfloor \overline{K_n}\rfloor + 1}^{p} \Delta_\ell^{(n)}\right)\\
     &=\sum_{p=0}^{\lfloor\underline{K_n}\rfloor}\left(\zeta^{(n)}_{p+1} -\zeta_{0}^{(n)}\right)
     +\sum_{p=\lfloor\underline{K_n}\rfloor+1}^{\lfloor \overline{K_n}\rfloor}\left(2\zeta_{\lfloor\underline{K_n} \rfloor+1}^{(n)} - \zeta_0^{(n)} -\zeta_{p+1}^{(n)}\right)\\
     &\hspace{0.5in} +  \sum_{p=\lfloor \overline{K_n}\rfloor+1}^{N-1} \left(2\zeta_{\lfloor\underline{K_n} \rfloor+1}^{(n)}-2\zeta_{\lfloor\overline{K_n} \rfloor+1}^{(n)} - \zeta_0^{(n)} +\zeta_{p+1}^{(n)}\right)\\
     &= \left(\sum_{p=0}^{N-1} \zeta^{(n)}_{p+1} - 2\sum_{p=\lfloor\underline{K_n}\rfloor+1}^{\lfloor \overline{K_n}\rfloor}\zeta^{(n)}_{p+1}\right)- N\zeta_0^{(n)} + (N -\lfloor \underline{K_n}\rfloor -1)\left(2\zeta_{\lfloor\underline{K_n} \rfloor+1}^{(n)}\right)\\
     & \hspace{0.5in} -\left(N -\lfloor \overline{K_n}\rfloor -1\right)\left(2\zeta_{\lfloor\overline{K_n} \rfloor+1}^{(n)}\right).\\
     &= \left(\sum_{p=0}^{N} \zeta^{(n)}_{p} - 2\sum_{p=\lfloor\underline{K_n}\rfloor+1}^{\lfloor \overline{K_n}\rfloor}\zeta^{(n)}_{p}\right)- (N+1)\zeta_0^{(n)} \\
     &\hspace{0.5in}+((N+1) -\lfloor \underline{K_n}\rfloor -1)\left(2\zeta_{\lfloor\underline{K_n} \rfloor+1}^{(n)}\right)\\
     & \hspace{0.5in} -\left((N+1) -\lfloor \overline{K_n}\rfloor -1\right)\left(2\zeta_{\lfloor\overline{K_n} \rfloor+1}^{(n)}\right).\\
\end{align*}
Combining both terms back yields: 
\begin{align*}
    \sum_{p=0}^N |\Delta_p^{(n)}|(p+1) &= (N+1) \zeta_{N+1}^{(n)}+\left( 2\sum_{p=\lfloor\underline{K_n}\rfloor+1}^{\lfloor \overline{K_n}\rfloor}\zeta^{(n)}_{p}- \sum_{p=0}^{N-1} \zeta_{p}^{(n)}\right) \\
    &+\left(\lfloor \underline{K_n}\rfloor +1\right)\left(2\zeta_{\lfloor\underline{K_n} \rfloor+1}^{(n)}\right) - \left(\lfloor \overline{K_n}\rfloor +1\right)\left(2\zeta_{\lfloor\overline{K_n}\rfloor + 1}^{(n)}\right).
\end{align*}
Letting $N \to \infty$, we have: \begin{align*}
    \sum_{p=0}^\infty |\Delta_p^{(n)}|(p+1) &= \left( 2\sum_{p=\lfloor\underline{K_n}\rfloor+1}^{\lfloor \overline{K_n}\rfloor}\zeta^{(n)}_{p}\right)+2\left(\lfloor \underline{K_n}\rfloor +1\right)\left(\zeta_{\lfloor\underline{K_n} \rfloor+1}^{(n)}\right) \\
    &\hspace{0.5in}- 2\left(\lfloor \overline{K_n}\rfloor +1\right)\left(\zeta_{\lfloor\overline{K_n}\rfloor + 1}^{(n)}\right)\\
    & \leq 2\sum_{p=\lfloor\underline{K_n}\rfloor+1}^{\lfloor \overline{K_n}\rfloor}\zeta^{(n)}_{p}+2\left(\lfloor \underline{K_n}\rfloor +1\right)\zeta_{\floor{K_n}+1}^{(n)}\\
    &\leq  \frac{33}{n} + \frac{2}{\sqrt{\lfloor \underline{K_n}\rfloor+1}}\\
    &\leq \frac{33}{n} + \frac{2}{\sqrt{K_n}}\\
    &\leq \frac{33 + 4\sqrt{3}}{n}
\end{align*}
When $n \leq 6$, $\underline{K_n} < 0$, and we obtain some reductions to our bound. In particular \begin{align*}
    \sum_{p=0}^\infty |\Delta_p^{(n)}|(p+1) &\leq \frac{33}{n} + 2\left(\frac{1}{\sqrt{\lfloor \overline{K_n}\rfloor+1}}\right)\\
    &\leq \frac{33}{n} + 2\\
    & = \frac{33 + 2n}{n}\\
    &\leq \frac{33 + 4\sqrt{3}}{n}.
\end{align*}
Hence $\sup_{n \in \mathbb{N}}n\sum_{p=0}^\infty |\Delta_p^{(n)}|(p+1) \leq 33 + 4\sqrt{3}< \infty$.
\end{proof}
The following result gives a closed form for the partial sums on $n$ of the coefficients $\alpha_p^{(n)}$.
\begin{prop}\label{sum_closed_form}
    For any $N\in\mathbb{N}^+$ and $p\in\mathbb{N}$, we have:
    \begin{equation}\label{eq:sum_powers}
        \sum_{n=1}^N\alpha_p^{(n)}=2^{-(N+2p)}\left[2^N\binom{2p}{p}-\binom{N+2p}{p}\right].
    \end{equation}
\end{prop}
\begin{proof}
    Define:
    \[\gamma_p^{(n)}=2^{-(n+2p)}\binom{n+2p}{p}.\]
    Then we have:
    \begin{align*}
        \gamma_p^{(n-1)}-\gamma_p^{(n)}&=2^{-(n+2p)}\left[2\binom{n+2p-1}{p}-\binom{n+2p}{p}\right] \\
        &=2^{-(n+2p)}\left[2\binom{n+2p}{p}\frac{n+p}{n+2p}-\binom{n+2p}{p}\right] \\
        &=2^{-(n+2p)}\binom{n+2p}{p}\left[\frac{2(n+p)}{n+2p}-1\right] \\
        &=\frac{n}{n+2p}2^{-(n+2p)}\binom{n+2p}{p} \\
        &=\alpha_p^{(n)}.
    \end{align*}
    Hence, the series can be written as:
    \[\sum_{n=1}^N\alpha_p^{(n)}=\sum_{n=1}^N\left(\gamma_p^{(n-1)}-\gamma_p^{(n)}\right)=\gamma_p^{(0)}-\gamma_p^{(N)}=2^{-(N+2p)}\left[2^N\binom{2p}{p}-\binom{N+2p}{p}\right],\]
    which yields the desired formula.
\end{proof}

\section{Properties of the Brunel Operator}
In this section we prove several fundamental estimates for the Brunel operator. We begin by discussing an important recurrence property of the Brunel operator, and apply this recurrence to show that $A(T)$ is invertible for any operator $T$ for which $A(T)$ exists. Note that below we define $A^0(T)=I$.
\begin{prop}\label{brunel_operator_recurrence}
For all $n\geq2$, we have:
\begin{equation*}
    A^n(T)T = 2A^{n-1}(T)-A^{n-2}(T).
\end{equation*}
\end{prop}
\begin{proof}
For $n \geq 2$, we have by Lemma \ref{alpha_reurrence} that $\alpha_p^{(n)} = 2\alpha_{p+1}^{(n-1)} - \alpha^{(n-2)}_{p+1}$. Therefore, we have:
\begin{align*}
    A^n(T)T &= \sum_{p=0}^\infty \alpha_p^{(n)} T^{p+1}\\
    &= \sum_{p=0}^\infty \left(2\alpha_{p+1}^{(n-1)} - \alpha_{p+1}^{(n-2)}\right) T^{p+1} \\
    &= \sum_{p=1}^\infty \left(2 \alpha_p^{(n-1)}-\alpha_p^{(n-2)}\right)T^p. 
\end{align*}
But $2\alpha_{0}^{(n-1)} - \alpha_{0}^{(n-2)} = 0$, so in fact: \begin{equation*}
    A^n(T)T = \sum_{p=0}^\infty \left(2 \alpha_p^{(n-1)}-\alpha_p^{(n-2)}\right)T^p = 2A^{n-1}(T)-A^{n-2}(T),
\end{equation*}
as claimed.
\end{proof}
\begin{prop}\label{inversion_formula}
    Let $T:X\to X$. Then the Brunel operator $A(T):X\to X$ is invertible whenever it exists, and we have $A^{-1}(T)=2I-A(T)T$.
\end{prop}
\begin{proof}
    Note that by the recurrence in Proposition \ref{brunel_operator_recurrence}, we have:
    \[A(T)\big(2I-A(T)T\big)=I,\]
    so it follows that $A^{-1}(T)=2I-A(T)T$.
\end{proof}
In \cite{Lootgieter}, Lootgieter proved that for $T:L^p(X,\mu)\to L^p(X,\mu)$, $A$ is a Ritt operator for the case of $T$ mean-bounded and positive. In the sequel we drop the condition of positivity and show that for $X$ a Banach space, $A$ is a Ritt operator for all $T \in \mathscr{L}(X)$ mean-bounded. While our main focus is on mean-bounded operators $T$, we first show that $A$ is a Ritt operator for $T$ power-bounded as an important special case in the following theorem. We note here that this theorem does follow from Theorem 4.1 of \cite{Dungey}, but we give an alternate proof here to illustrate how the study of the Brunel operator reduces to the study of the coefficients $\al{p}{n}$; our proof uses elementary techniques and requires only the properties of the coefficients discussed in the previous sections.
\begin{thm}\label{power_bound_bound}
Let $T$ be an operator on a Banach space $X$. Let $A=\psi(T)$ be the Brunel operator defined by $T$. If $T$ is power-bounded, then $A$ is power-bounded, and
    \begin{equation*}
    \sup_{n\in\mathbb{N}}n\|A^n-A^{n+1}\|\leq C.
\end{equation*}
If $\left\Vert T^n\right\Vert\leq M$ for all $n\in\mathbb{N}$, then $C$ can be taken to be $33M/2$.
\end{thm}
\begin{proof}
Since $T$ is power-bounded, there exists an $M < \infty$ such that $\|T^p\| \leq M$ for all $p \in \mathbb{N}$. Fix $n \in \mathbb{N}$; then \begin{equation}
    \|A^n\| \leq \sum_{p=0}^\infty \alpha_p^{(n)}\left\Vert T^p\right\Vert \leq M\sum_{p=0}^\infty\alpha_p^{(n)}=M.
\end{equation}
Thus $A$ is power-bounded. Now let $f \in X$ where $\|f\| \leq 1$. We then have: \begin{equation*}
    n\|A^nf-A^{n+1}f\| = n\left\|\sum_{p=0}^\infty (\alpha_p^{(n)}-\alpha_p^{(n+1)}) T^pf\right\| \leq n\sum_{p=0}^\infty \left|\alpha_p^{(n)}-\alpha_p^{(n+1)}\right| \|T^pf\|.
\end{equation*}
Again, since $T$ is power-bounded, we have that $\left\|T^pf\right\| \leq M < \infty$ for all $p\in \mathbb{N}$ and $f$ such that $\|f\| \leq 1$. Thus:
\begin{equation*}
    n\sum_{p=0}^\infty \left|\alpha_p^{(n)}-\alpha_p^{(n+1)}\right| \|T^pf\| \leq Mn\sum_{p=0}^\infty \left|\alpha_p^{(n)}-\alpha_p^{(n+1)}\right| \leq \frac{33M}{2} < \infty.
\end{equation*}
As this bound holds for all $n\in\mathbb{N}$, we see that:
\begin{equation*}
    \sup_{n\in\mathbb{N}}n\|A^nf-A^{n+1}f\|\leq\sup_{n\in\mathbb{N}}n\sum_{p=0}^\infty \left|\alpha_p^{(n)}-\alpha_p^{(n+1)}\right| \|T^pf\|\leq \frac{33M}{2} < \infty.
\end{equation*}
Since $f$ was arbitrary, the theorem holds. 
\end{proof}
We now wish to extend this theorem to the case where $T$ is mean-bounded. 

\begin{thm}\label{mean_bound_bound}
    Let $X$ be a Banach space and let $T:X\to X$ be mean-bounded (not necessarily positive). Then $A = \psi(T)$ is power-bounded, and there exists a constant $C<\infty$ such that:
    \begin{equation}{\label{difference_bound}}
        \sup_{n\in\mathbb{N}}n\left\Vert A^{n+1}-A^n\right\Vert\leq C.
    \end{equation}
    If $\left\Vert M_n(T)\right\Vert\leq M$ for all $n\in\mathbb{N}$, the constant $C$ can be taken to be $M(33+4\sqrt3)$.
\end{thm}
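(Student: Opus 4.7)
The plan is to reduce everything, via two rounds of Abel summation, to the estimates on $|\alpha_p^{(n+1)}-\alpha_p^{(n)}|$ and its first $p$-differences that are already furnished by Proposition \ref{uniform_p_estimate} and Lemma \ref{Upper_Lower_Kn}. The key observation is that mean-boundedness replaces the pointwise bound $\|T^p\|\le M$ with the weighted bound $\|S_pf\|\le M(p+1)\|f\|$, where $S_p=\sum_{k=0}^pT^k$ and $M=\sup_N\|M_N(T)\|$, so each factor of $T^p$ in the coefficient expansion of $A^n$ costs a factor of $p+1$ on the coefficient side.

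Concretely, setting $e_p=\alpha_p^{(n+1)}-\alpha_p^{(n)}$ and writing $T^p=S_p-S_{p-1}$ (with $S_{-1}=0$) in $A^{n+1}f-A^nf=\sum_pe_pT^pf$, a first summation by parts yields
\[
A^{n+1}f-A^nf=\lim_{N\to\infty}\Big(e_NS_Nf+\sum_{p=0}^{N-1}(e_p-e_{p+1})S_pf\Big).
\]
The boundary term vanishes because Proposition \ref{uniform_p_estimate} gives $|e_N|=O(N^{-3/2})$ while $\|S_Nf\|=O(N)\|f\|$. Taking norms reduces the theorem to proving
\[
\sum_{p=0}^{\infty}(p+1)|e_p-e_{p+1}|\le\frac{C}{n}.
\]

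Here Lemma \ref{Upper_Lower_Kn} is decisive: for $n$ large enough, the sequence $d_p=-e_p=\alpha_p^{(n)}-\alpha_p^{(n+1)}$ is monotone on each of the three blocks $[0,\underline{K_n}]$, $[\underline{K_n},\overline{K_n}]$, and $[\overline{K_n},\infty)$ (increasing, decreasing, increasing respectively). On any monotone block $[a,b]$, a second summation by parts gives
\[
\sum_{p=a}^{b}(p+1)|d_{p+1}-d_p|\le(b+1)|d_{b+1}|+(a+1)|d_a|+\Big|\sum_{p=a+1}^{b}d_p\Big|.
\]
From the quadratic formula applied to $f_n$ one reads off $\underline{K_n},\overline{K_n}\asymp n^2$, and from Proposition \ref{uniform_p_estimate} we have $|d_p|\le C/p^{3/2}$. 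Thus each boundary term $(K+1)|d_{K+1}|$ is $O(n^2)\cdot O(n^{-3})=O(1/n)$, and each partial or tail sum of $d_p$ is bounded by $\sum_{p\ge\underline{K_n}}C/p^{3/2}\le C/\sqrt{\underline{K_n}}=O(1/n)$, so the three blocks together give the desired bound.

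Power-boundedness of $A$ is obtained by the same scheme applied directly to $A^nf=\sum_p\alpha_p^{(n)}T^pf$: Abel-sum once to get $\|A^nf\|\le M\|f\|\sum_p(p+1)|\alpha_p^{(n)}-\alpha_{p+1}^{(n)}|$, split at $I_n$ using Lemma \ref{ijk_lemma}(1), and use $(I_n+1)\alpha_{\lfloor I_n\rfloor}^{(n)}=O(1)$ from Lemma \ref{i(n)_bound} together with $\sum_{p\ge I_n}\alpha_p^{(n)}\le\psi^n(1)=1$ to produce a bound independent of $n$. The main obstacle will be the careful bookkeeping across the three regions: $d_p$ changes sign near $p=J_n$ with $\underline{K_n}<J_n<\overline{K_n}$, so the sign change sits inside the middle (decreasing) block and cancellations must be majorized via $|\sum d_p|\le\sum|d_p|$ rather than exploited. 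A secondary obstacle is that Lemma \ref{Upper_Lower_Kn} requires $n$ large enough for both roots of $f_n$ to be positive, so the finitely many small-$n$ cases must be absorbed into $C$ by a direct check.
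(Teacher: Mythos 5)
Your proposal is correct and follows essentially the same route as the paper: Abel summation in $p$ against the partial sums (equivalently the Cesàro averages), the three monotone blocks supplied by Lemma \ref{Upper_Lower_Kn}, boundary terms controlled by Proposition \ref{uniform_p_estimate} together with $\underline{K_n},\overline{K_n}\asymp n^2$, a separate check for the finitely many small $n$, and the same $I_n$/Lemma \ref{i(n)_bound} argument for power-boundedness. The one slip is that the interior sum of the first block, $\sum_{p\le\underline{K_n}}d_p$, is not a tail starting at $\underline{K_n}$ and so cannot be bounded by $\sum_{p\ge\underline{K_n}}Cp^{-3/2}$; it is nonetheless $O(1/n)$, either by invoking Theorem \ref{sum_coefficient_bound} (as the paper does) or because $d_p$ is nonnegative and increasing on that block, giving $\sum_{p\le\underline{K_n}}d_p\le\underline{K_n}\,d_{\lfloor\underline{K_n}\rfloor}=O(1/n)$.
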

\begin{remark}
    Note that Brunel and Emilion provided a sketch of a proof that $A$ is power-bounded which explicitly assumes that $T$ is positive (see page 104 of \cite{Brunel}); a proof for 2.3 on page 105 is not provided. Brunel and Emilion estimate the norm of the partial sums for the Brunel operator $A^n(T)$, which suffices to prove the result for the case when $T$ is positive, but in the general case a more subtle argument is required to show that the limit of the partial sums exists. The proof below instead shows that the partial sums are Cauchy in norm, which is now a straightforward application of our estimates on the coefficients $\al{p}{n}$.
\end{remark}
\begin{proof}
To show that $A(T)$ is power-bounded, we use an Abel summation and Theorem \ref{sup_2_bound} to show that the norms of the partial sums of $A^n(T)$ form a Cauchy sequence. To prove Equation \eqref{difference_bound}, we utilize Theorem \ref{sum_coefficient_bound} and Lemma \ref{Upper_Lower_Kn}. Recall that: \begin{equation}
    M_N = M_N(T) = \frac{I + \cdots + T^{N-1}}{N}.
\end{equation}
Set $M_0 = 0$. An Abel summation yields \begin{align*}
    \sum_{p =0}^{N}\alpha_p^{(n)}T^p = \sum_{p = 0}^{N-1}\left(\alpha_{p}^{(n)}- \alpha_{p+1}^{(n)}\right)(p+1)M_{p+1} + \alpha_{N}^{(n)}(N+1)M_{N+1}.
\end{align*}
Let $M =\sup_n\|M_n(T)\|$, fix $f \in X$, and set \[\left(S_N^{(n)}f\right)_{N\in\mathbb{N}}\equiv \left(\sum_{p=0}^N \alpha_p^{(n)}T^pf\right)_{N\in\mathbb{N}}.\] For $L \in \mathbb{N}$ such that $N \leq L$, we have: \begin{align*}
    \left\|S_L^{(n)}f - S_N^{(n)}f\right\|&\leq M \Bigg[\sum_{p = N}^{L-1}{\left|\alpha_{p}^{(n)}-\alpha_{p+1}^{(n)}\right|(p+1)} \\
    &\hspace{0.5in}+ \left|(L+1) \alpha_{L}^{(n)} - (N+1)\alpha_N^{(n)}\right| \Bigg]\|f\|.
\end{align*} 
Since $N\alpha_N^{(n)} \to 0$  as $N \to \infty$ (Corollary \ref{N_alpha_N_estimate}), Theorem \ref{sup_2_bound} tells us that $S_N^{(n)}f$ is a Cauchy sequence in $X$. 
Hence, $\left(S_N^{(n)}f\right)_{N\in\mathbb{N}}$ is a Cauchy sequence for every $n\in \mathbb{N}$ and every $f\in X$. Since $X$ is a Banach space, $\left(S_N^{(n)}f\right)_{N\in\mathbb{N}}$ converges to $A^n f$ in $X$. We also have that, for all $n \in \mathbb{N}$: \begin{equation}
    \|S_N^{(n)}\| \leq M \sum_{p =0}^{N-1}|\alpha_{p}^{(n)}-\alpha_{p+1}^{(n)}|(p+1) + (N+1)\alpha_{N}^{(n)} < \infty.
\end{equation}
   
Fix $\varepsilon >0$, $f \in X$ such that $\|f\| \leq 1$, and $n \in \mathbb{N}$. 
Choose $N$ large enough so that $\|A^n f-S_N^{(n)}f\|<\varepsilon/(M+1)$ and $(N+1)\alpha_N^{(n)}<\varepsilon/(M+1)$. Then, we have: \begin{align*}
    \left\|A^{n}f\right\| &\leq \left\|A^{n}f - S_N^{(n)}f\right\| + \left\|S^{(n)}_{N}f\right\|\\
    & \leq \left\|A^{n}f - S_N^{(n)}f\right\|+ M\left[1 + \frac{5}{\sqrt{\pi}}+ (N+1)\alpha_N^{(n)} \right]\\
    & \leq\frac{\varepsilon}{M+1} + M\left[1 +\frac{5}{\sqrt{\pi}}+ \frac{\varepsilon}{M+1}\right]\\
    & \leq M\left[1 + \frac{5}{\sqrt{\pi}}\right]+\varepsilon.
\end{align*}
Since $\varepsilon$ was arbitrary, we indeed have $\|A^nf\| \leq  M\left[1 + \frac{5}{\sqrt{\pi}}\right]$ for all $f$ with $\|f\|\leq 1$, and thus $A$ is power-bounded.

Set $\zeta_{p}^{(n)} = \alpha_{p}^{(n)}-\alpha_{p}^{(n+1)}$. For \eqref{difference_bound}, another Abel summation
yields: 
\begin{align*}
    \sum_{p=0}^N\zeta_{p}^{(n)}T^p &= \sum_{p = 0}^{N-1}(\zeta_p^{(n)}-\zeta_{p+1}^{(n)})(p+1)M_{p+1} + \zeta_{N}^{(n)}(N+1)M_{N+1}.
\end{align*}
    Thus, \begin{align*}
        \left\|\sum_{p=0}^N\zeta_{p}^{(n)}T^p\right\| &\leq M\left(\sum_{p=0}^{N-1}\left|\zeta_{p}^{(n)} - \zeta_{p+1}^{(n)}\right|(p+1) + (N+1)\left|\zeta_{N}^{(n)}\right|\right).
    \end{align*}
    Note, for all $f \in X$, \[\left(S_N^{(n)}f - S_{N}^{(n+1)}f\right)\to_N A^{(n)}f - A^{(n+1)}f.\] Now let $f\in X$ with $\Vert f\Vert\leq1$, and fix $\varepsilon>0$ and $n\in\mathbb{N}$. Choose $N$ large enough such that $\Vert(A^{n+1}f-A^n f)-(S_N^{(n+1)}f-S_N^{(n)}f)\Vert<\varepsilon/(M +1)$ and $(N+1)\left|\zeta_N^{(n)}\right|< \varepsilon/(M+1)$. By the triangle inequality, we have by Theorem \ref{sup_3_bound}:
    \begin{align*}
        \Vert A^{n+1}f-A^nf\Vert&\leq\Vert(A^{n+1}f-A^n f)-(S_N^{(n+1)}f-S_N^{(n)}f)\Vert+\Vert S_N^{(n+1)}-S_N^{(n)}\Vert \\
        &<\frac{\varepsilon}{M+1}+\left\|\sum_{p=0}^N\zeta_{p}^{(n)}T^p\right\|\\
        &\leq \frac{\varepsilon}{M+1}+  M\left(\frac{33 + 4\sqrt{3}}{n} + \frac{\varepsilon}{M+1}\right)\\
        & \leq \varepsilon + M\left(\frac{33 + 4\sqrt{3}}{n}\right).
    \end{align*}
    As $\varepsilon$ was arbitrary, it follows that, for all $f \in X$, $\|f\|\leq 1$:
    \[\Vert A^{n+1}f-A^nf\Vert\leq M\left(\frac{33 + 4\sqrt{3}}{n}\right),\]
    and therefore $\sup_{n \in \mathbb{N}}n\Vert A^{n+1}-A^n\Vert\leq M\left(33 + 4\sqrt{3}\right)< \infty$.
\end{proof}
\begin{cor}\label{pointwise_convergence}
    Let $(X,\mathcal{M},\mu)$ be a measure space, and for some $p\in(1,\infty]$, let $T:L^p(X,\mu)\to L^p(X,\mu)$ be mean-bounded (but not necessarily positive). Then for $f\in L^p(X,\mu)$, $A^n(T)f(x)-A^{n+1}(T)f(x)\to0$ for $\mu$-almost every $x\in X$.
\end{cor}
\begin{proof}
    Let $p\in(1,\infty)$. By Equation \eqref{difference_bound}, we have:
    \[\int_X\left|A^n(T)f-A^{n+1}(T)f\right|^p\,d\mu\leq\frac{C}{n^p}.\]
    Hence, the following sum is finite:
    \[\sum_{n=1}^\infty\int_X\left|A^n(T)f-A^{n+1}(T)f\right|^p\,d\mu<\infty.\]
    Interchanging the sum and integral by the monotone convergence theorem, we find that:
    \[\int_X\sum_{n=1}^\infty\left|A^n(T)f-A^{n+1}(T)f\right|^p\,d\mu<\infty.\]
    This implies that:
    \[\sum_{n=1}^\infty\left|A^n(T)f-A^{n+1}(T)f\right|^p<\infty\quad\mu\text{-almost everywhere}.\]
    But this sum can converge only if the tail of the series converges to zero, which implies that for $\mu$-almost every $x\in X$:
    \[\lim_{n\to\infty}\left|A^n(T)f(x)-A^{n+1}(T)f(x)\right|^p=0,\]
    and so the result follows. For the case $p=\infty$, note that:
    \[\left|A^n(T)f(x)-A^{n+1}(T)f(x)\right|\leq\Vert A^{n+1}(T)-A^n(T)\Vert\cdot\Vert f\Vert_{L^\infty}\]
    for $\mu$-almost every $x\in X$, and so the desired conclusion follows by letting $n\to\infty$ and using Equation \eqref{difference_bound}.
\end{proof}
\begin{cor}\label{ritt}
    For $X$ a (complex) Banach space and $T:X\to X$ mean-bounded, the Brunel operator $A=\psi(T)$ is a Ritt operator.
\end{cor}
\begin{remark}
    Theorem 1.3 (III) of \cite{Dungey} implies that for any mean-bounded $T:X\to X$, there exists a power-bounded operator $S:X\to X$ and an $\alpha\in(0,1)$ such that $\psi(T)=I-(I-S)^\alpha$. See \cite{Dungey} and  \cite{Tomilov} for an in-depth treatment of Ritt operators and their relationship to power-bounded operators.
\end{remark}
\begin{cor}\label{ritt_cor}
    Let $X$ be a Banach space, $T:X\to X$ mean-bounded. Then the following hold:
    \begin{enumerate}
        \item $\sup_{n\in\mathbb{N}}n^2\left\Vert A^n(T)\big(I-A(T)\big)^2\right\Vert<\infty$;
        \item $\sup_{n\in\mathbb{N}}n^2\left\Vert A^n(T)(I-T)\right\Vert<\infty$.
    \end{enumerate}
    Further, if $X=L^p(Y,\mu)$, $1\leq p\leq\infty$, then both $A^n(T)\big(I-A(T)\big)^2f$ and $A^n(T)(I-T)f$ converge $\mu$-almost everywhere to zero as $n\to\infty$ for any $f\in L^p(Y,\mu)$.
\end{cor}
\noindent Note that, unlike in Corollary \ref{pointwise_convergence}, the proof below shows that the stated convergence also holds when $p=1$.
\begin{proof}
    For (1), we have the estimate:
    \begin{align*}
        n^2\left\Vert A^{2n}(T)\big(I-A(T)\big)^2\right\Vert&=\left\Vert \left(nA^n(T)\big(I-A(T)\big)\right)\left(nA^n(T)\big(I-A(T)\big)\right)\right\Vert \\
        &\leq n\left\Vert A^n(T)\big(I-A(T)\big)\right\Vert\cdot n\left\Vert A^n(T)\big(I-A(T)\big)\right\Vert \\
        &\leq C^2
    \end{align*}
    by Theorem \ref{mean_bound_bound}. This proves the bound for $n$ even. For $n$ odd, write $n=2m+1$; we have:
    \begin{align*}
        n^2\left\|A^n(T)\big(I-A(T)\big)^2\right\|&=(2p+1)^2\left\|A^{2p+1}(T)\big(I-A(T)\big)^2\right\| \\
        &\leq5\|A\|\cdot p^2\left\|A^{2m}(T)\big(I-A(T)\big)^2\right\| \\
        &\leq5\|A\|\cdot C^2,
    \end{align*}
    so the bound holds for $n$ odd as well.
    
    For (2), note that $(I-A)^2=I-2A+A^2=-TA^2+A^2=A^2(I-T)$ by the recurrence in Proposition \ref{brunel_operator_recurrence}. Hence, we have:
    \[\sup_{n\in\mathbb{N}}n^2\left\|A^{n+2}(T)(I-T)\right\|\leq\sup_{n\in\mathbb{N}}n^2\left\|A^n(T)\big(I-A(T)\big)^2\right\|<\infty,\]
    so the estimate holds. The same argument used in the proof of Corollary \ref{pointwise_convergence} shows that if $X=L^p(Y,\mu)$ and $1\leq p\leq\infty$, then for any $f\in L^p(Y,\mu)$, both $A^n(T)\big(I-A(T)\big)^2f$ and $A^n(T)(I-T)f$ converge $\mu$-almost everywhere to zero as $n\to\infty$.
\end{proof}

Consider the following operator $T$ in $\mathbb{R}^2$:
\[T=\begin{pmatrix}
- 1 & 2 \\
0 & -1 \\
\end{pmatrix}.\]
Notice that $T$ is not power-bounded, but it is mean-bounded. However, there are $x \in \mathbb{R}^2$ such that $M_N(T)x$ doesn't even converge. In spite of this, as the above theorem shows, we have $\sup_n\|A^n\|< \infty$. A direct computation of the Brunel operator for this example is done in the appendix.

We now have an important result that relates the Cesàro averages of $T$ to the Cesàro averages of $A\equiv\psi(T)$. When $T$ is mean-bounded and positive, the result reduces the study of the maximal function for the averages of $T$ to the maximal function for the averages of $A$. By doing so, it reduces the study of the norm or pointwise convergence of mean-bounded positive operators to the study of the pointwise convergence of positive power-bounded operators. More on this relationship is given in \cite{preprint}.

The following theorem was stated in \cite{Brunel} without details. In \cite{Lootgieter}, Lootgieter provides an outline of the proof using Stirling's formula ((12), pg. 4301) and the asymptotic equivalence given in \cite{Krengel} (Lemma 3.2, pg. 212). We provide a more computationally driven proof using Proposition \ref{sum_closed_form} which allows us to determine an upper bound with an explicit constant. 

\begin{remark}
    Note that in the following, $[x] = \lfloor x + 1\rfloor$, i.e  $[x]$ is the integer part of $x + 1$. 
\end{remark}
\begin{thm}\label{cesaro_square_root_average_bound}
    Let $T$ be a positive mean-bounded operator on a Banach lattice $X$. Then for any $n\in\mathbb{N}^+$:
    \begin{equation*}
        M_n(T) \leq \frac{2\sqrt{\pi}}{1 - e^{-1/10}} M_{[\sqrt{n}]}(A)
    \end{equation*}
    where $A=\psi(T)$.
\end{thm} 
\begin{proof}
Fix $n \in \mathbb{N}$. Then with Proposition \ref{sum_closed_form}, we have: \begin{align*}
    M_{[\sqrt{n}]}(A) &= {\frac{1}{[\sqrt{n}]}}\sum_{j = 0}^{[ \sqrt{n}]-1}\sum_{p = 0}^\infty \alpha_{p}^{(j)}T^p\\
    &\geq {\frac{1}{[\sqrt{n}]}}\sum_{j = 0}^{[ \sqrt{n}]-1}\sum_{p = 0}^{n-1} \alpha_{p}^{(j)}T^p\\
    &= \frac{1}{[ \sqrt{n}]}\left[I + \sum_{p=1}^{n-1} T^p \sum_{j = 0}^{[ \sqrt{n}] -1}\alpha_p^{(j)}  \right]\\
    &= \frac{1}{[ \sqrt{n}]}\left[I + \sum_{p=1}^{n-1}T^p \cdot2^{-([ \sqrt{n}] -1 + 2p)} \left[2^{[\sqrt{n}] -1}{2p \choose p} - {{[ \sqrt{n}] -1 + 2p }\choose p}\right]\right].
\end{align*}
We can rewrite the coefficient inside the sum as:
\begin{equation*}
    2^{-([ \sqrt{n}] -1 + 2p)} \left[2^{[\sqrt{n}] -1}{2p \choose p} - {{[ \sqrt{n}] -1 + 2p }\choose p}\right] = 4^{-p}\binom{2p}{p}\left(1-P_p^{(n)}\right).
\end{equation*}
With $P_{p}^{(n)}$ defined in Proposition \ref{product_estimate}. Propositions \ref{Hirschhorn_bound} and \ref{product_estimate} now give us:
\begin{align*}
    4^{-p}\binom{2p}{p}\left(1-P_p^{(n)}\right) \geq \frac{1}{\sqrt{\pi(p +1)}}\left(1 - e^{-1/10}\right)
\end{align*}
Hence: 
\begin{align*}
    M_{[\sqrt{n}]}(A)& \geq \frac{1}{[ \sqrt{n}]}\sum_{p=0}^{n-1}T^p \left(\frac{1}{\sqrt{\pi n}}\left[ 1-e^{-1/10} \right]\right)\\
    & \geq \frac{1}{n}\sum_{p=0}^{n-1}T^p \left(\frac{[\sqrt{n-1}]}{[\sqrt{n}]\sqrt{\pi}}\left[ 1-e^{-1/10} \right]\right)\\
    &\geq \frac{1-e^{-1/10}}{2\sqrt{\pi}}M_n(T).
\end{align*}
    Thus: \begin{equation}
        M_n(T) \leq \frac{2\sqrt{\pi}}{1-e^{-1/10}}M_{[ \sqrt{n}]}(A).\qedhere
    \end{equation}
\end{proof}

\section{Appendix}
Here we prove part (3) of Lemma \ref{ijk_lemma} and Lemma \ref{Upper_Lower_Kn}. We also provide an example of a mean-bounded non-positive matrix $T$ for which $A(T)$ is power-bounded, and give a complete proof of Theorem \ref{sup_2_bound} with an explicit upper bound.

\subsection{Proof of Lemma \ref{ijk_lemma} (3)} \begin{proof}
Let \begin{equation*}
    \kappa_p^{(n)}\equiv \frac{2n(n+p+1)-(n+1)(n+2p)}{(n+p+2)2(n+p)}
\end{equation*} We start with the following chain of equivalences: 
\begin{align*}
     \alpha_p^{(n)} - \alpha_p^{(n+1)} &\leq \alpha_{p+1}^{(n)}-\alpha_{p+1}^{(n+1)}\\
    \bigg{[}2^{-n-2p}{{n+2p}\choose{p}}\bigg{]}\kappa_p^{(n)}
    &\leq \bigg{[}2^{-n-2p-2}{{n+2p+2}\choose{p+1}}\bigg{]}\kappa_{p+1}^{(n)}\\
     4\frac{(n+2p)!}{p!(n+p)!} \frac{(p+1)!(n+p+1)!}{(n+2p+2)!} \kappa_p^{(n)}
    &\leq \kappa_{p+1}^{(n)}\\
4\frac{(p+1)(n+p+1)}{(n+2p+2)(n+2p+1)}\kappa_p^{(n)}
        &\leq \kappa_{p+1}^{(n)}\\
         4\frac{p+1}{n+2p+1}\frac{2n(n+p+1)-(n+1)(n+2p)}{2(n+2p)}
        &\leq 
\kappa_{p+1}^{(n)}(n+p+2) \\
        \frac{4(p+1)(n^2+n-2p)}{2(n+2p+1)(n+2p)} &\leq \frac{n^2+n-2p-2}{2(n+p+2)}\\
\end{align*}
 Upon clearing denominators and collecting terms by powers of $p$,  we find that the above inequalities are equivalent to:
        \begin{equation} \label{equiv}
            p(12n^2+12n-12)-12p^2 \leq n^4-2n^3-13n^2-10n.
        \end{equation}
Removing the $-12p^2$, we see that $p \leq K_n$ 
implies (\ref{equiv}), and this proves (3).
\end{proof}

\subsection{Proof of Lemma \ref{Upper_Lower_Kn}}
\begin{proof}
Fix $n\in\mathbb{N}$. We first define a more precise version of $K_n$ that provides a more detailed estimate. Consider the zeros of the following quadratic equation in $p$:
    \[f(p)=-12p^2+12(n^2+n-1)p-(n^4-2n^3-13n^2-10n).\]
    We have that $f(p)=0$ if and only if:
    \[p=-\frac{1}{24}\left(-12(n^2+n-1)\pm\sqrt{144(n^2+n-1)^2-48(n^4-2n^3-13n^2-10n)}\right).\]
    Note that we may choose $n$ large enough such that both roots are positive, and for the remainder of the proof we assume that both roots are nonnegative. Denote the larger root by $\overline{K_n}$ and the smaller root by $\underline{K_n}$. We consider several cases. Recall that $J_n = \frac{n^2+n}{2}$.
    \begin{enumerate}
        \item \emph{Case 1:} Suppose that $p < J_n$. Then  $p\leq\underline{K_n}$ if and only if:
    \[-24p+12(n^2+n-1)\geq\sqrt{144(n^2+n-1)^2-48(n^4-2n^3-13n^2-10n)},\]
    or, upon rearranging:
    \[-12p^2+12p(n^2+n-1)\leq n^4-2n^3-13n^2-10n,\]
    and it follows from the above that this inequality is equivalent to the inequality:
    \[\alpha_p^{(n)}-\alpha_p^{(n+1)}\leq\alpha_{p+1}^{(n)}-\alpha_{p+1}^{(n+1)}.\]
    \item \emph{Case 2:} The above computations show that $\underline{K_n}\leq p< J_n$ if and only if $\alpha_p^{(n)}-\alpha_p^{(n+1)}\geq\alpha_{p+1}^{(n)}-\alpha_{p+1}^{(n+1)}$.
    \item \emph{Case 3:} If $p>J_n$, then $p \leq \overline{K_n}$ iff: \[-24p +12(n^2+n-1) \geq - \sqrt{144(n^2+n-1)^2-48(n^4-2n^3-13n^2-10n)},\] Hence: \[-12p^2+12p(n^2+n-1) \geq n^4-2n^3-13n^2-10n,\]
    which again is equivalent to the inequality $\alpha_p^{(n)}-\alpha_p^{(n+1)}\geq\alpha_{p+1}^{(n)}-\alpha_{p+1}^{(n+1)}$.
    \item \emph{Case 4:} It follows from case 3 that $p \geq \overline{K_n}$ if and only if \begin{equation*}
        \alpha_p^{(n)}-\alpha_p^{(n+1)} \leq \alpha_{p+1}^{(n)}-\alpha_{p+1}^{(n+1)}
    \end{equation*}
    \item \emph{Case 5:} If $p = J_n$, it follows by definition that \begin{equation*}
        \alpha_p^{(n)}-\alpha_p^{(n+1)} \leq \alpha_{p+1}^{(n)}-\alpha_{p+1}^{(n+1)}.
    \end{equation*}
    \end{enumerate}
    This completes the proof.
\end{proof}
\subsection{Proof of Theorem \ref{sup_2_bound}}
\begin{proof}
When $n < 5$, $I_n < 0$, and so \begin{equation}
    \sum_{p=0}^\infty |\alpha_{p}^{(n)}-\alpha_{p+1}^{(n)}|(p+1) = \sum_{p=0}^\infty (\alpha_{p}^{(n)}-\alpha_{p+1}^{(n)})(p+1) =1.
\end{equation}
Fix $n \in \mathbb{N}, n \geq 5$ and $N \in \mathbb{N}$ such that $N > \lfloor I_n \rfloor$. An Abel summation yields: \begin{align*}
    \sum_{p=0}^N |\alpha_p^{(n)}-\alpha_{p+1}^{(n)}|(p+1) & =  \underbrace{(N+1) \sum_{p=0}^N |\alpha_{p}^{(n)}-\alpha_{p+1}^{(n)}|}_{(*)}- \underbrace{\sum_{p =0}^{N-1}\sum_{\ell = 0}^p |\alpha_{\ell}^{(n)} - \alpha_{\ell+1}^{(n)}|}_{(**)}. 
\end{align*}
For the first sum, we have: \begin{align*}
    (*)&=  (N+1)\left( \sum_{p=0}^{\lfloor I_n\rfloor}\alpha_{p+1}^{(n)}- \alpha_{p}^{(n)} + \sum_{p=\lfloor I_n\rfloor +1}^{N} \alpha_{p}^{(n)}-\alpha_{p+1}^{(n)}\right)\\
    &= (N+1)\left(2\alpha^{(n)}_{\lfloor I_n\rfloor + 1}- \alpha_0^{(n)} - \alpha_{N+1}^{(n)}\right). 
\end{align*}
For the second sum, we similarly have: 
\begin{align*}
    (*) & = \sum_{p=0}^{\lfloor I_n \rfloor} \sum_{\ell=0}^p \alpha^{(n)}_{\ell+1}-\alpha^{(n)}_{\ell} + \sum_{p=\lfloor I_n\rfloor + 1}^{N-1}\left(\sum_{\ell = 0}^{\lfloor I_n\rfloor}\alpha_{\ell+1}^{(n)} - \alpha_{\ell}^{(n)} + \sum_{\ell = \lfloor I_n \rfloor + 1}^p \alpha^{(n)}_{\ell} - \alpha^{(n)}_{\ell +1} \right)\\
    &= \sum_{p=0}^{\lfloor I_n \rfloor} \left(\alpha_{p+1}^{(n)}-\alpha_0^{(n)}\right) + \sum_{p=\lfloor I_n\rfloor + 1}^{N-1}\left(2\alpha^{(n)}_{\lfloor I_n\rfloor +1} - \alpha_0^{(n)} -\alpha^{(n)}_{p+1}\right)\\
    &=  \sum_{p=0}^{\lfloor I_n \rfloor} \alpha_{p+1}^{(n)} - \sum_{p=\lfloor I_n \rfloor + 1}^{N-1} \alpha_{p+1}^{(n)} - N\alpha_0^{(n)} + (N-\lfloor I_n \rfloor -1)\left(2\alpha_{\lfloor I_n\rfloor + 1}^{(n)}\right ).
\end{align*}
Adding and subtracting $\sum_{p = \floor{I_n} + 1}^{N-1} \al{p}{n}$, we have
\begin{align*}
    (*) &= \sum_{p=0}^{N-1}\alpha_{p+1}^{(n)} - 2\sum_{p = \floor{I_n} +1}^{N-1} \alpha_{p + 1}^{(n)} - N\alpha_0^{(n)} + (N-\lfloor I_n \rfloor -1)\left(2\alpha_{\lfloor I_n\rfloor + 1}^{(n)}\right )\\
   & = \sum_{p=0}^{N}\alpha_{p}^{(n)} - 2\sum_{p = \floor{I_n}+1}^{N} \alpha_{p}^{(n)} - (N+1)\alpha_0^{(n)}   + ((N+1)-\lfloor I_n \rfloor -1)\left(2\alpha_{\lfloor I_n\rfloor + 1}^{(n)}\right ).
\end{align*}

Combining these terms yields:
\begin{align*}
    \sum_{p=0}^N |\alpha_p^{(n)}-\alpha_{p+1}^{(n)}|(p+1) & =  2\sum_{p = \floor{I_n} +1}^{N}\alpha_p^{(n)}- \sum_{p =0}^N\alpha_{p}^{(n)}+2(\floor{I_n}+1)\al{\floor{I_n}+1}{n} \\
    & \hspace{0.5in}- (N+1)\al{N+1}{n}. \\
\end{align*}
Letting $N \to \infty$, we have $(N+1) \alpha_{N + 1}^{(n)} \to 0$ by Corollary \ref{N_alpha_N_estimate}, and hence we have: 
\begin{align*}
\sum_{p=0}^\infty |\alpha_p^{(n)}-\alpha_{p+1}^{(n)}|(p+1) &\leq 1 + 2(\lfloor I_n \rfloor +1)\alpha^{(n)}_{\lfloor I_n \rfloor + 1}\\
&\leq 1 + 2\frac{n(\lfloor I_n \rfloor + 1) }{n + 2(\lfloor I_n \rfloor+1)}\frac{1}{\sqrt{\pi (\lfloor I_n \rfloor +1)}}\\
&\leq 1 +  \frac{n}{\sqrt{\pi(\lfloor I_n \rfloor + 1)}}\\
&\leq 1 + \frac{5}{\sqrt{\pi}}
\end{align*}
Hence \begin{equation}
    \sup_{n \in \mathbb{N}}\sum_{p =0}^\infty |\alpha_{p}^{(n)}- \alpha_{p+1}^{(n)}|(p+1) < 1 + \frac{5}{\sqrt{\pi}} < \infty. 
\end{equation}
\end{proof}

\subsection{Nonpositive Mean-Bounded $T$ with $A(T)$ Power-Bounded}
\begin{prop}
    Define $T^n:\mathbb{R}^2\to\mathbb{R}^2$ for any $n\in\mathbb{N}$ via the following matrix representation:
    \[T^n=\begin{pmatrix}
    (-1)^n & 2n(-1)^{n+1} \\
    0 & (-1)^n \\
    \end{pmatrix}.\]
    Then we have:
    \[A^n(T)=\begin{pmatrix}
    (\sqrt2-1)^n & -2^{1-n}\cdot n\cdot S(n) \\
    0 & (\sqrt2-1)^n \\
    \end{pmatrix},\]
    where we define:
    \[S(n)=\sum_{p=0}^\infty\left(-\frac{1}{4}\right)^p\binom{n+2p-1}{p-1}.\]
\end{prop}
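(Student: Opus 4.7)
The plan is to split the proof into two parts: first, derive the claimed matrix form of $A^n(T)$ by expanding $\sum_{p=0}^\infty \alpha_p^{(n)} T^p$ entrywise using the given closed form for $T^p$; second, establish power-boundedness by verifying that $T$ is mean-bounded and invoking Theorem \ref{mean_bound_bound}.

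For the matrix formula, I would note that $T^p$ is upper triangular with both diagonal entries equal to $(-1)^p$ and $(1,2)$-entry $2p(-1)^{p+1}$, so the diagonal entries of $A^n(T)$ both equal
\[\sum_{p=0}^\infty (-1)^p \alpha_p^{(n)} = \psi^n(-1) = \left(\frac{1-\sqrt{2}}{-1}\right)^n = (\sqrt{2}-1)^n,\]
with convergence at $x=-1$ guaranteed by Lemma \ref{applied_krengel}. For the $(1,2)$-entry, I would write
\[\sum_{p=0}^\infty 2p(-1)^{p+1}\alpha_p^{(n)} = -\frac{2n}{2^n}\sum_{p=1}^\infty \left(-\frac{1}{4}\right)^p \frac{p}{n+p}\binom{n+2p-1}{p},\]
and apply the factorial identity $\frac{p}{n+p}\binom{n+2p-1}{p} = \binom{n+2p-1}{p-1}$ (verified by unpacking factorials), which collapses the right-hand side to $-2^{1-n} n\, S(n)$, matching the claim. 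Absolute convergence justifying the rearrangement follows from Lemma \ref{applied_krengel}.

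For power-boundedness, rather than estimating $S(n)$ directly, I would verify that $T$ itself is mean-bounded and then apply Theorem \ref{mean_bound_bound}. Computing $M_N(T) = \frac{1}{N}\sum_{k=0}^{N-1}T^k$ entrywise, the diagonal entries equal $\frac{1}{N}\sum_{k=0}^{N-1}(-1)^k \in \{0, 1/N\}$, while the off-diagonal entry is $-\frac{2}{N}\sum_{k=0}^{N-1}k(-1)^k$. A short case analysis on the parity of $N$ shows that $\sum_{k=0}^{N-1}k(-1)^k$ is $-N/2$ or roughly $(N-1)/2$, so the off-diagonal entry of $M_N(T)$ is bounded by $1$ uniformly. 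Thus $\sup_N \|M_N(T)\| < \infty$, and Theorem \ref{mean_bound_bound} then yields $\sup_n \|A^n(T)\| < \infty$.

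The main obstacle is conceptual rather than technical: the explicit expression $-2^{1-n}\cdot n\cdot S(n)$ for the $(1,2)$-entry does not obviously bound uniformly in $n$, so the cleanest route to power-boundedness is to bypass direct asymptotic analysis of $S(n)$ and instead leverage the mean-boundedness of $T$ together with the general theorem already established.
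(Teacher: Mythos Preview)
Your derivation of the matrix formula for $A^n(T)$ is essentially the same as the paper's: both compute the diagonal entries as $\psi^n(-1)=(\sqrt2-1)^n$ and reduce the $(1,2)$-entry via the identity $\tfrac{p}{n+p}\binom{n+2p-1}{p}=\binom{n+2p-1}{p-1}$. One small correction: your appeal to ``absolute convergence from Lemma~\ref{applied_krengel}'' does not hold---the series $\sum_p p\,\alpha_p^{(n)}$ behaves like $\sum_p p^{-1/2}$ and diverges, so the $(1,2)$-entry series converges only conditionally. No rearrangement is actually being performed, so you only need convergence, which you can get either from the mean-boundedness argument you give later (Theorem~\ref{mean_bound_bound} shows the partial sums $S_N^{(n)}f$ are Cauchy) or by a direct alternating-series check.

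For power-boundedness your route is genuinely different from the paper's. The paper estimates $n\cdot 2^{1-n}S(n)$ directly: it splits the alternating series $S(n)$ at $p=J_n$, uses the alternating series test on the tail and monotonicity on the head, and bounds both pieces via Lemma~\ref{sean_lemma} to obtain an explicit asymptotic $\limsup_n n\cdot 2^{1-n}|S(n)|\le 4e^{1/12}\sqrt{2/\pi}$. You instead verify $\sup_N\|M_N(T)\|<\infty$ by the elementary parity computation and then invoke Theorem~\ref{mean_bound_bound}. Your approach is shorter and entirely valid; what it gives up is the independent, hands-on verification that the appendix is meant to provide---the paper's point here is to exhibit a concrete example where the conclusion of Theorem~\ref{mean_bound_bound} can be checked by direct computation rather than by applying the theorem itself. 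Both arguments are correct; yours is more economical, the paper's is self-contained and yields an explicit constant for the off-diagonal entry.
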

\begin{proof}
    Note that the series:
    \[\sum_{p=0}^\infty(-1)^p\alpha_{p}^n=\sum_{p=0}^\infty\frac{(-1)^p\cdot n}{n+p}2^{-(n+2p)}\binom{n+2p-1}{p}\]
    corresponds to evaluation of $\psi^n(x)$ at $x=-1$, which has the value $(\sqrt2-1)^n$ for any $n\in\mathbb{N}$. In addition, we have:
    \begin{align*}
        -\sum_{p=0}^\infty\frac{2p\cdot(-1)^p\cdot n}{n+p}4^{-p}\binom{n+2p-1}{p}&=-2n\sum_{p=0}^\infty\frac{p\cdot(-1)^p}{n+p}4^{-p}\cdot\frac{(n+2p-1)!}{p!(n+p-1)!} \\
        &=-2n\sum_{p=0}^\infty\left(-\frac{1}{4}\right)^p\frac{(n+2p-1)!}{(p-1)!(n+p)!} \\
        &=-2n\sum_{p=0}^\infty\left(-\frac{1}{4}\right)^p\binom{n+2p-1}{p-1}.
    \end{align*}
    Upon multiplying both sides by $2^{-n}$, this shows that the desired matrix representation of $A^n(T)$ holds.
\end{proof}
\begin{remark}
    The matrix $T$ in this proposition was originally given by I. Assani as a counterexample to the mean ergodic theorem when the operator $T$ is not assumed to be positive (see \cite{Emilion}, page 643).
\end{remark}

\section*{Acknowledgements}
The authors would like to thank Y. Tomilov for bringing to their attention the previous work done in \cite{Dungey} and \cite{Tomilov} regarding Ritt operators. They would also like to thank the previous reviewer for their valuable suggestions as well as for bringing the authors' attention to Lootgieter's paper \cite{Lootgieter}.

\nocite{*}
\bibliographystyle{alpha}

\end{document}